\theoremstyle{plain}
\newtheorem{corollary}{\bf Corollary}
\newtheorem{lemma}{\bf Lemma}
\newtheorem{proposition}{\bf Proposition}
\newtheorem{remark}{Remark}
\newtheorem{theorem}{\bf Theorem}
\newtheorem{conjecture}{\bf Conjecture}
\theoremstyle{definition}
\newtheorem{example}{\bf Example}
\numberwithin{equation}{section}
\title[Critical metrics of the scalar and volume functionals]{Remarks on critical metrics of the scalar curvature and volume functionals on compact manifolds with boundary}
\author{H. Baltazar}
\author{E. Ribeiro Jr}
\address{Universidade Federal do Piau\'{\i} - UFPI, Departamento de Matem\'{a}tica, Campus Petr\^onio Portella, 64049-550, Te\-re\-si\-na / PI, Brazil.}
\email{halyson@ufpi.edu.br}
\address{Universidade Federal do Cear\'a - UFC, Departamento  de Matem\'atica, Campus do Pici, Av. Humberto Monte, Bloco 914,
60455-760, Fortaleza / CE, Brazil.}\email{ernani@mat.ufc.br}
\thanks{H. Baltazar and E. Ribeiro Jr were partially supported by CNPq/Brazil}
\subjclass[2010]{Primary 53C25, 53C21; Secondary  53C24}
\keywords{Riemannian functional; critical metrics; static space; scalar curvature}
\date{December 12, 2016}
\begin{document}

\newcommand{\spacing}[1]{\renewcommand{\baselinestretch}{#1}\large\normalsize}
\spacing{1.2}

\begin{abstract}
We provide a general B\"ochner type formula which enables us to prove some rigidity results for $V$-static spaces. In particular, we show that  an  $n$-dimensional positive static triple with connected boundary and positive scalar curvature must be isometric to the standard hemisphere, provided that the metric has zero radial Weyl curvature and satis\-fies a suitable pinching condition. Moreover, we classify $V$-static spaces with non-negative sectional curvature.
\end{abstract} 

\maketitle

\section{Introduction}
\label{intro}
Let $(M^{n},\,g)$ be a connected Riemannian manifold. Following the terminology used by Miao and Tam \cite{miaotam} as well as Corvino, Eichmair and Miao \cite{CEM}, we say that $g$ is a {\it $V$-static metric} if there is a smooth function $f$ on $M^n$ and a constant $\kappa$ satisfying the $V$-static equation
\begin{equation}
\label{eqVstatic} \mathfrak{L}_{g}^{*}(f)=-(\Delta f)g+Hess\, f-fRic=\kappa g,
\end{equation} where $\mathfrak{L}_{g}^{*}$ is the formal $L^{2}$-adjoint of the linearization of the scalar curvature operator $\mathfrak{L}_{g},$ which plays an important role in problems related to prescribing the scalar curvature function. Here, $Ric,$ $\Delta$ and $Hess$ stand, respectively, for the Ricci tensor, the Laplacian operator and the Hessian form on $M^n.$ Such a function $f$ is called {\it $V$-static potential}.

It is well-known that $V$-static metrics are important in understanding the interplay between volume and scalar curvature. It arises from the modified problem of finding stationary points for the volume functional on the space of metrics whose scalar curvature is equal to a given constant (cf. \cite{CEM,miaotam,miaotamTAMS,yuan}). In general, the scalar curvature is not sufficient in controlling the volume. However,  Miao and Tam \cite{miaotamCAG} proved a rigidity result for upper hemisphere with respect to non-decreasing scalar curvature and volume. While Corvino, Eichmair and Miao  \cite{CEM} were able to show that when the metric $g$ does not admit non-trivial solution to Eq. (\ref{eqVstatic}), then one can achieve simultaneously a prescribed perturbation of the scalar curvature that is compactly supported in a bounded domain $\Omega$ and a prescribed perturbation of the volume by a small deformation of the metric in $\overline{\Omega}.$ We highlight that a Riemannian metric $g$ for which there exists a nontrivial function $f$ satisfying (\ref{eqVstatic}) must have constant scalar curvature $R$ (cf. Proposition 2.1 in \cite{CEM} and Theorem 7 in \cite{miaotam}).

The case where $\kappa\neq 0$ in (\ref{eqVstatic}) and the potential function $f$ vanishes on the boundary was studied by Miao and Tam \cite{miaotam}. In this approach, a {\it Miao-Tam critical metric} is a 3-tuple $(M^n,\,g,\,f),$ where $(M^{n},\,g)$ is a compact Riemannian manifold of dimension at least three with a smooth boundary $\partial M$ and $f: M^{n}\to \Bbb{R}$ is a smooth function such that $f^{-1}(0)=\partial M$ satisfying the overdetermined-elliptic system
\begin{equation}
\label{eqMiaoTam1} \mathfrak{L}_{g}^{*}(f)=-(\Delta f)g+Hess\, f-fRic=g.
\end{equation} Miao and Tam \cite{miaotam} showed that these critical metrics arise as critical points of the volume functional on $M^n$ when restricted to the class of metrics $g$ with prescribed constant scalar curvature such that $g_{|_{T \partial M}}=h$ for a prescribed Riemannian metric $h$ on the boundary. Some explicit examples of Miao-Tam critical metrics are in the form of warped products and those examples include the spatial Schwarzschild metrics and AdS-Schwarzschild metrics restricted to certain domains containing their horizon and bounded by two spherically symmetric spheres (cf. Corollaries 3.1 and 3.2 in \cite{miaotamTAMS}). For more details see, for instance, \cite{baltazar,BDR,BDRR,CEM,miaotam,miaotamTAMS} and \cite{yuan}.

We also remark that Eq. (\ref{eqVstatic}) can be seen as a generalization of the {\it static equation} $\mathfrak{L}_{g}^{*}(f)=0$ (cf. \cite{Lucas} and  \cite{Corvino}), namely,  $\kappa=0$ in Eq. (\ref{eqVstatic}). We remember that a {\it positive static triple} is a triple $(M^{n},\,g,\,f)$ consisting of a connected $n$-dimensional smooth manifold $M$ with boundary $\partial M$ (possibly empty), a complete Riemannian metric $g$ on $M$ and a nontrivial static potential $f\in C^{\infty}(M)$ that is non-negative, vanishes precisely on $\partial M$ and satisfies the static equation 
\begin{equation}
\label{staticequation} \mathfrak{L}_{g}^{*}(f)=-(\Delta f)g+Hess\, f-fRic=0.
\end{equation} For the sake of completeness, it is very important to recall the following classical example of positive static triple with non-empty boundary.

\begin{example}
\label{ex1}
An example of positive static triple with connected non-empty boundary is given by choosing $(\Bbb{S}^{n}_{+}(r),\,g)$ the open upper $n$-hemisphere $\Bbb{S}^{n}_{+}(r)$ of radius $r$ in $\Bbb{R}^{n+1}$ endowed with the Euclidean metric $g.$ Hence, $\partial M=\Bbb{S}^{n-1}(r)$ is the equator and the corresponding height function $f$ on $\Bbb{S}^{n}_{+}(r)$ is positive, vanishes along $\partial M=\Bbb{S}^{n-1}(r)$ and satisfies Eq. (\ref{staticequation}).
\end{example}

It has been conjectured in 1984 that the only static vacuum spacetime with positive cosmological constant and connected event horizon is the  de Sitter space of radius $r.$ This conjecture is so-called {\it cosmic no-hair conjecture} and it was formulated by Boucher, Gibbons and Horowitz in \cite{BGH}. It is closely related to Fischer-Marsden conjecture (cf. \cite{Shen}). It should be emphasized that there are positive static triple with double boundary, as for instance, the {\it Nariai space}. Hence, connectedness of the boundary is essential for Conjecture \ref{conjA} to be true. In other words, the quoted authors proposed the following conjecture.

\begin{conjecture}[Cosmic no-hair conjecture, \cite{BGH}] 
\label{conjA}
Example \ref{ex1} is the only possible $n$-dimensional positive static triple with single-horizon (connected) and positive scalar curvature.
\end{conjecture}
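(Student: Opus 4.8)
The plan is to reduce Conjecture \ref{conjA} to an Einstein-rigidity statement: I will argue that a positive static triple $(M^{n},g,f)$ with connected boundary and positive (hence constant) scalar curvature $R>0$ must be Einstein, after which the static equation degenerates to an Obata equation and the rigidity follows at once. Tracing \eqref{staticequation} gives $\Delta f=-\tfrac{R}{n-1}f$, and substituting back yields
\begin{equation*}
Hess\,f+\frac{R}{n(n-1)}\,f\,g=f\,\mathring{Ric},
\end{equation*}
where $\mathring{Ric}=Ric-\tfrac{R}{n}g$ is the traceless Ricci tensor. If $\mathring{Ric}\equiv 0$, then $Hess\,f=-\tfrac{R}{n(n-1)}f\,g$ with $f>0$ in the interior and $f^{-1}(0)=\partial M$; Obata's theorem, adapted to this boundary value problem, then identifies $(M^{n},g)$ with the round hemisphere $\Bbb{S}^{n}_{+}(r)$ of Example \ref{ex1}. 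Thus the whole difficulty is to prove that $M$ is Einstein.

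First I would record the boundary geometry, since it governs every integration by parts to come. On $\partial M=f^{-1}(0)$ we have $f=0$ and $\Delta f=0$, so the displayed identity forces $Hess\,f=0$ along $\partial M$; hence $\partial M$ is totally geodesic and $|\nabla f|\equiv c$ is a positive constant on the (connected) boundary. These two facts are exactly what is needed to annihilate, or to give a definite sign to, the boundary integrals produced below. Next I would apply the general B\"ochner-type formula established earlier in the paper to the tensor $f\,\mathring{Ric}$ (equivalently, to the traceless Hessian of $f$), obtaining a pointwise Bochner identity for $|f\,\mathring{Ric}|^{2}$ whose curvature term splits into a nonnegative quadratic piece in $\mathring{Ric}$, a cubic piece in $\mathring{Ric}$ of indefinite sign, and a piece linear in the Weyl tensor of the radial form $W(\nabla f,\cdot,\cdot,\nabla f)$ contracted against $\mathring{Ric}$.

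Integrating this identity over $M$, invoking the divergence theorem, and discarding the boundary integrals by means of the boundary data above, I would arrive at an equation schematically of the form
\begin{equation*}
\int_{M}f\,|\nabla \mathring{Ric}|^{2}\,dM+\frac{R}{n-1}\int_{M}f\,|\mathring{Ric}|^{2}\,dM
=\int_{M}f\,\Big(c\,\mathrm{tr}\big(\mathring{Ric}^{\,3}\big)-\mathring{Ric}\big(W(\nabla f,\cdot,\cdot,\nabla f)\big)\Big)\,dM,
\end{equation*}
whose left-hand side is manifestly nonnegative because $f>0$ in the interior. The crux is therefore to prove that the right-hand side is nonpositive, for then both sides vanish, $\mathring{Ric}\equiv 0$, and the Obata reduction closes the argument.

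This last step is the main obstacle, and it is precisely where the unconditional conjecture resists the present method. The radial Weyl curvature $W(\nabla f,\cdot,\cdot,\nabla f)$ carries no a priori sign, and the cubic term $\mathrm{tr}(\mathring{Ric}^{\,3})$ is likewise indefinite; in dimension $n\ge 4$ neither can be absorbed into the nonnegative bulk terms without additional input. To settle Conjecture \ref{conjA} as stated, one would need a second, independent integral identity controlling the Weyl tensor itself, most naturally obtained by combining the second Bianchi identity with the Cotton and Bach tensors of a static metric, which enjoy special divergence relations because $R$ is constant and $Hess\,f$ is prescribed, so as to dominate the full right-hand integral by the nonnegative terms already present. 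Producing such a Weyl-tensor estimate with no curvature restriction is the genuinely hard point: it is exactly what compels the hypotheses of zero radial Weyl curvature and the pinching inequality in the theorem one can presently prove, and it is the gap that must be bridged to establish the conjecture in full generality.
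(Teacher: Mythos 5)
You were asked about a statement that the paper itself does not prove: Conjecture \ref{conjA} is stated as an open conjecture, and the paper only supplies partial answers (Corollary \ref{corB}, Theorem \ref{thmstaticKA}, and the cited results of Kobayashi, Lafontaine, and Qing--Yuan). Your proposal is, by your own admission, a program rather than a proof, and your diagnosis of where it stops is accurate. The steps you do carry out are correct and coincide with the paper's machinery: tracing (\ref{staticequation}) gives $\Delta f=-\frac{R}{n-1}f$, your displayed identity is exactly $f\mathring{Ric}=\mathring{Hess\, f}$, i.e.\ (\ref{IdRicHess}) with $\kappa=0$; the boundary being totally geodesic with $|\nabla f|$ a positive constant on each component is the standard static-triple boundary analysis; the Einstein case reduces to the hemisphere via Reilly's Obata-type theorem \cite{Reilly2}, which is precisely how the paper disposes of that case; and your schematic integral identity is, up to bookkeeping, Theorem \ref{thmMainA} with $\kappa=0$ integrated over $M$ (in the paper the cubic term is then handled by Okumura's inequality $tr(\mathring{Ric}^{3})\geq-\frac{n-2}{\sqrt{n(n-1)}}|\mathring{Ric}|^{3}$ together with the pinching hypothesis, and the radial Weyl term is killed by assumption). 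Indeed, the two hypotheses you identify as the price of closing the argument --- zero radial Weyl curvature and the pinching bound $|\mathring{Ric}|^{2}\leq\frac{R^{2}}{n(n-1)}$ --- are exactly the hypotheses of Corollary \ref{corB}, so your route is the paper's route to its partial answer.

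The one substantive error is in your final paragraph: the missing Weyl-tensor estimate is not merely ``the genuinely hard point'' to be bridged, it is provably unbridgeable in dimensions $4\leq n\leq 8$. As the paper notes, Gibbons, Hartnoll and Pope \cite{GHP} constructed counterexamples to Conjecture \ref{conjA} in exactly this range, so any unconditional argument forcing $\mathring{Ric}\equiv 0$ (or any unconditional domination of the right-hand side of your integral identity) would contradict known examples. Consequently no proof of the conjecture as stated can exist for those $n$; the statement can only hold in the remaining dimensions (notably $n=3$, where $W\equiv 0$ makes your radial Weyl term vanish automatically --- the setting of Ambrozio \cite{Lucas}) or under supplementary hypotheses such as Bach-flatness \cite{jiewei} (Theorem \ref{classsifStatic}), conformal flatness \cite{kobayashi,lafontaine}, or the conditions of Corollary \ref{corB} and Theorem \ref{thmstaticKA}. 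A careful statement of your plan should therefore replace ``the gap that must be bridged to establish the conjecture in full generality'' by the observation that the method's obstruction is genuine: the counterexamples show the extra hypotheses are not an artifact of the technique but, in dimensions $4\leq n\leq 8$, a necessity. One smaller caveat: in case the pinching is achieved with equality rather than strictness, even the conditional argument does not give the hemisphere --- Corollary \ref{corB} must allow the cylindrical alternative $|\mathring{Ric}|^{2}=\frac{R^{2}}{n(n-1)}$, whose boundary is disconnected, which is consistent with, but must be explicitly excluded by, the single-horizon hypothesis.
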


In the last decades some partial answers to Conjecture \ref{conjA} were achieved. For instance, if $(M^{n},\,g)$ is Einstein it suffices to apply the Obata type theorem due to Reilly \cite{Reilly2} (see also \cite{obata}) to conclude that Conjecture \ref{conjA} is true. Moreover, Kobayashi \cite{kobayashi} and Lafontaine \cite{lafontaine} proved independently that such a conjecture is also true under conformally flat condition.

For what follows, we recall that the Bach tensor on a Riemannian manifold $(M^n,g),$ $n\geq 4,$ is defined in terms of the components of the Weyl tensor $W_{ikjl}$ as follows
\begin{equation}
\label{bach} B_{ij}=\frac{1}{n-3}\nabla^{k}\nabla^{l}W_{ikjl}+\frac{1}{n-2}R_{kl}W_{i}\,^{k}\,_{j}\,^{l},
\end{equation} while for $n=3$ it is given by
\begin{equation}
\label{bach3} B_{ij}=\nabla^kC_{kij},
\end{equation} where $C_{ijk}$ stands for the Cotton tensor. We say that $(M^n,g)$ is Bach-flat when $B_{ij}=0.$

Qing and Yuan \cite{jiewei} obtained a classification result for static spaces under Bach-flat assumption. In particular, it is not hard to see that the method used by Qing and Yuan implies that such a conjecture is also true under Bach-flat assumption (cf. Theorem \ref{classsifStatic} below). In the work \cite{GHP}, Gibbons, Hartnoll and Pope constructed counterexamples to the cosmic no-hair conjecture in the cases $4\leq n\leq 8.$ However, it remains interesting to show under which conditions such a conjecture remains true. For more details on this subject and further partial answers see, for instance, \cite{Lucas,BGH,Ch,HMR,Shen} and references therein. Next, let us recall the following useful classification. 

\begin{theorem}[Kobayashi \cite{kobayashi}, Lafontaine \cite{lafontaine}, Qing and Yuan \cite{jiewei}]
\label{classsifStatic} Let $(M^{n},\,g,\,f)$ be an $n$-dimensional positive static triple with scalar curvature $R=n(n-1).$ Suppose that $(M^{n},\,g)$ is Bach-flat, then $(M^{n},\,g,\,f)$ is covered by a static triple equivalent to one of the following static triples:

\begin{enumerate}
\item The standard hemisphere with canonical metric
$$(\mathbb{S}^{n}_{+},g_{\mathbb{S}^{n-1}},f=x_{n+1}).$$
\item The standard cylinder over $\mathbb{S}^{n-1}$ with the product metric
$$\Big(M=\Big[0,\frac{\pi}{\sqrt{n}}\Big]\times\mathbb{S}^{n-1},\; g=dt^{2}+\frac{n-2}{n}g_{\mathbb{S}^{n-1}},\; f(t)=sen(\sqrt{n}t)\Big).$$
\item For some constant $m\in\Big(0,\sqrt{\frac{(n-2)^{n-2}}{n^{n}}}\Big)$ we consider the Schwarzschild space defined by 
$$\Big(M=\Big[r_{1},r_{2}]\times\mathbb{S}^{n-1},\; g=\frac{1}{1-2mt^{2-n}-t^{2}}dt^{2}+t^{2}g_{\mathbb{S}^{n-1}},\; f(t)=\sqrt{1-2mt^{2-n}-t^{2}}\Big),$$
where $r_{1}<r_{2}$ are the positive zeroes of $f.$
\end{enumerate}

\end{theorem}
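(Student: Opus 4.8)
The plan is to follow the Bach-flat strategy of Qing and Yuan, reducing the classification to the conformally flat case settled by Kobayashi and Lafontaine. Throughout, write $T=Ric-\frac{R}{n}g$ for the trace-free Ricci tensor and recall $R=n(n-1)$. First I would extract the basic analytic consequences of the static equation \eqref{staticequation}, which reads $Hess\,f=(\Delta f)g+fRic$. Tracing gives $\Delta f=-\frac{R}{n-1}f=-nf$, and subtracting the trace part yields the clean trace-free identity $Hess\,f-\frac{\Delta f}{n}g=fT$. This identity is the engine of the whole argument, since it converts geometric information about $T$ and the Weyl tensor $W$ into analytic information about the single potential $f$.

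Next I would differentiate this trace-free equation and commute covariant derivatives, invoking the Ricci identity to bring in the full curvature tensor. Decomposing curvature into its Weyl, Ricci and scalar parts, and using the constancy of $R$ (so that the Cotton tensor reduces to $C_{ijk}=\nabla_i R_{jk}-\nabla_j R_{ik}$), the objective is a pointwise identity of the schematic form $f\,C_{ijk}=(T\ast\nabla f)_{ijk}+W_{ijkl}\nabla^{l}f$, expressing the Cotton tensor weighted by $f$ in terms of the trace-free Ricci tensor paired with $\nabla f$ and the radial part of the Weyl tensor. This is precisely the static-space analogue of the Cao--Chen identity for gradient Ricci solitons.

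The key step is then an integral B\"ochner-type argument. Using the relation between the Bach tensor and the divergence of the Cotton tensor coming from \eqref{bach} and \eqref{bach3}, I would compute a weighted integral such as $\int_M f\,\langle B,T\rangle\,dM$ and integrate by parts. Because $f$ vanishes precisely on $\partial M$, the boundary contributions vanish, and Bach-flatness $B\equiv 0$ forces the resulting nonnegative integrand to vanish identically. Combined with the pointwise identity above, this yields $W_{ijkl}\nabla^{l}f\equiv 0$ together with $f\,C_{ijk}\equiv 0$, so the Cotton tensor vanishes on the open set $\{f>0\}$ and the metric has harmonic Weyl curvature there.

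It remains to translate this rigidity into the three model spaces. Away from the critical set of $f$, the identity $Hess\,f-\frac{\Delta f}{n}g=fT$ together with $W(\cdot,\cdot,\cdot,\nabla f)=0$ shows that $\nabla f/|\nabla f|$ is a Ricci eigendirection and that the regular level sets of $f$ are totally umbilic hypersurfaces of constant curvature; hence the metric is locally a warped product $dt^{2}+\phi(t)^{2}g_{N}$ over a space form $N$. Feeding this ansatz back into the static equation reduces everything to an ODE for the warping function $\phi$, whose solutions, after matching smoothness at the critical points of $f$ (where $M$ either caps off or is bounded by two horizons), are exactly the hemisphere, the cylinder over $\mathbb{S}^{n-1}$, and the Schwarzschild family. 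The Einstein subcase $T\equiv 0$ is disposed of directly by the Obata--Reilly theorem, producing the hemisphere. The main obstacle I anticipate is this final structural analysis: one must control the behavior of $f$ near its critical points, prove that the level sets are genuine space forms rather than merely Einstein, and integrate the warping ODE carefully enough to recover the exact constants, in particular the admissible mass range for the Schwarzschild solutions.
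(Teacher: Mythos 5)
This statement is quoted in the paper as a known classification (attributed to Kobayashi, Lafontaine, and Qing--Yuan); the paper supplies no proof of it, so there is no internal argument to compare against. Your outline is, in substance, a faithful reconstruction of the strategy in the cited references: the trace and trace-free consequences of the static equation, the identity $fC_{ijk}=T_{ijk}+W_{ijkl}\nabla_{l}f$ (which is exactly Eq.\ (3.1) of this paper), the weighted integral of $f\langle B,Ric\rangle$ to extract $C_{ijk}=0$ and $W(\cdot,\cdot,\cdot,\nabla f)=0$ from Bach-flatness, and the reduction to the warped-product ODE classified by Kobayashi and Lafontaine. So the route is the standard one, not a new one.

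That said, as a proof the sketch is thin in exactly the places where the real work lies. First, the integral identity you invoke produces an integrand of the form $\tfrac{n-3}{(n-2)^{2}}\,f^{-1}|T_{ijk}|^{2}$ (or an equivalent expression), so you must justify integrability near $\partial M=f^{-1}(0)$ and the vanishing of boundary terms, and you must treat $n=3$ separately, since there the coefficient $n-3$ degenerates and the Bach tensor is defined through the Cotton tensor as in \eqref{bach3} rather than \eqref{bach}. Second, the conclusion of the integral step is $T_{ijk}\equiv 0$, from which $W(\cdot,\cdot,\cdot,\nabla f)=0$ and $fC_{ijk}=0$ follow via the pointwise identity; stating that the level sets are ``constant curvature rather than merely Einstein'' and that the warping ODE integrates to precisely the three models with the stated mass range $m\in\bigl(0,\sqrt{(n-2)^{n-2}/n^{n}}\bigr)$ is the entire content of the Kobayashi--Lafontaine analysis, and your proposal defers it rather than carrying it out. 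As an account of how the cited theorem is proved in the literature the proposal is accurate; as a self-contained proof it leaves the decisive structural and ODE analysis unproven.
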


In the work \cite{Lucas}, Ambrozio obtained interesting classification results for static three-dimensional manifolds with positive scalar curvature. To do so, he proved a B\"ochner type formula for three-dimensional positive static triples in\-vol\-ving the traceless Ricci tensor and the Cotton tensor. A similar B\"ochner type formula was obtained by Batista et al. \cite{BDRR} for three-dimensional Riemannian manifolds satisfying (\ref{eqMiaoTam1}). Those formulae may be used to rule out some possible new examples. In this article, we extend such B\"ochner type formulae for a more general class of metrics and arbitrary dimension $n>2.$ To be precise, we have established the following result.

\begin{theorem}\label{thmMainA}
Let $(M^{n},\,g,\,f,\kappa)$ be a connected, smooth Riemannian manifold and $f$ is a smooth function on $M^n$ satisfying the $V$-static equation (\ref{eqVstatic}). Then we have:
\begin{eqnarray}
\frac{1}{2}{\rm div}(f\nabla|Ric|^{2})&=&\Big(\frac{n-2}{n-1}|C_{ijk}|^{2}+|\nabla Ric|^{2}\Big)f+ \frac{n\kappa}{n-1}|\mathring{Ric}|^{2}\nonumber\\
&&+\Big(\frac{2}{n-1}R|\mathring{Ric}|^{2}+\frac{2n}{n-2}tr(\mathring{Ric}^{3})\Big)f\nonumber\\
&&-\frac{n-2}{n-1}W_{ijkl}\nabla_{l}fC_{ijk}-2fW_{ijkl}R_{ik}R_{jl},
\end{eqnarray} where $C$ stands for the Cotton tensor, $W$ is the Weyl tensor and $\mathring{Ric}$ is the traceless Ricci tensor.
\end{theorem}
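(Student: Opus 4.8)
The plan is to prove this Böchner-type divergence identity by a direct tensorial computation, starting from the $V$-static equation and differentiating twice. First I would rewrite \eqref{eqVstatic} in index notation. Taking the trace of $-(\Delta f)g+Hess\,f-fRic=\kappa g$ gives a relation $-(n-1)\Delta f - fR = n\kappa$, which (using that the scalar curvature $R$ is constant, as noted after \eqref{eqVstatic}) lets me express $\Delta f$ in terms of $f$ and $R$. The traceless part of the equation reads $\nabla_i\nabla_j f - \frac{\Delta f}{n}g_{ij} = f\,\mathring{R}_{ij}$, i.e. the traceless Hessian of $f$ equals $f$ times the traceless Ricci tensor. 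These two consequences are the structural input for everything that follows.

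Next I would expand the left-hand side using the product rule:
\begin{equation}
\tfrac{1}{2}\mathrm{div}(f\nabla|Ric|^2) = \tfrac{1}{2}f\,\Delta|Ric|^2 + \tfrac{1}{2}\langle\nabla f,\nabla|Ric|^2\rangle.
\end{equation}
The first term is handled by the classical Böchner formula $\tfrac{1}{2}\Delta|Ric|^2 = |\nabla Ric|^2 + \langle Ric,\Delta Ric\rangle$, so the key is to compute the rough Laplacian $\Delta R_{ij}$. For this I would commute covariant derivatives on the contracted second Bianchi identity $\nabla^k R_{ik}=\tfrac12\nabla_i R=0$ and invoke the standard Weitzenböck/Lichnerowicz identity expressing $\Delta R_{ij}$ in terms of $\nabla_k\nabla_l R_{ikjl}$ (or equivalently the Cotton tensor) plus curvature quadratic terms $R_{ikjl}R_{kl}$ and $R_{ik}R_{kj}$. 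Decomposing the full curvature tensor into its Weyl, Ricci and scalar parts is what produces the Weyl terms $W_{ijkl}R_{ik}R_{jl}$ and the cubic trace $tr(\mathring{Ric}^3)$; the Cotton tensor $C_{ijk}=\nabla_k R_{ij}-\nabla_j R_{ik}$ (with $R$ constant the usual scalar-curvature correction drops out) enters through $\langle Ric,\nabla\cdot\!\text{(Cotton)}\rangle$ and the algebraic identity relating $|\nabla Ric|^2$ to $|C_{ijk}|^2$, which is the origin of the factor $\tfrac{n-2}{n-1}$.

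The second term $\tfrac12\langle\nabla f,\nabla|Ric|^2\rangle = f\langle\nabla f, \ldots\rangle$-type contraction is where the $V$-static equation re-enters: I would substitute $\nabla_i\nabla_j f = f R_{ij}+(\kappa+\tfrac{\Delta f}{... } )g_{ij}$ to convert derivatives of $f$ paired against $\nabla Ric$ into the $\kappa|\mathring{Ric}|^2$ term and additional curvature contractions. Combining the Laplacian expansion, the Weitzenböck identity, the Weyl/Ricci decomposition, and these substitutions, then collecting like terms, should yield exactly the six terms on the right-hand side. The main obstacle I anticipate is bookkeeping: correctly tracking the numerical coefficients through the decomposition of $R_{ikjl}$ into Weyl plus Schouten parts, and in particular isolating the $\frac{n-2}{n-1}$ factors and the cubic term $\frac{2n}{n-2}tr(\mathring{Ric}^3)$, since sign and normalization conventions for $C$, $W$, and $\mathring{Ric}$ must be kept perfectly consistent. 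The conceptual steps are standard, but the algebra demands care to make the Weyl terms survive rather than cancel.
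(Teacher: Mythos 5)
Your overall strategy (expand the divergence by the product rule, apply the Bochner formula $\tfrac12\Delta|Ric|^2=|\nabla Ric|^2+\langle Ric,\Delta Ric\rangle$, compute $\Delta Ric$ by a Weitzenb\"ock-type commutation on $\nabla^{k}R_{ik}=0$, and decompose the curvature into Weyl plus Schouten parts) is a reasonable starting point and does reproduce part of the right-hand side: that commutation together with the Weyl decomposition yields the terms $\big(\tfrac{1}{n-1}R|\mathring{Ric}|^{2}+\tfrac{n}{n-2}tr(\mathring{Ric}^{3})\big)f-fW_{ijkl}R_{ik}R_{jl}$, i.e.\ half of the cubic and Weyl--Ricci terms in the statement. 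But there is a genuine gap in how you propose to finish. After the Bochner step you are left with $\tfrac12\langle\nabla f,\nabla|Ric|^{2}\rangle$ and with a residual divergence of the form $f\nabla_{i}(C_{ijk}R_{jk})$, neither of which appears in the final formula. Your plan to ``substitute $\nabla_i\nabla_j f=fR_{ij}+\cdots$'' into $\tfrac12\langle\nabla f,\nabla|Ric|^{2}\rangle$ cannot be carried out as stated, because that term contains only first derivatives of $f$ contracted against first derivatives of $Ric$, so the Hessian never appears in it. Likewise, the coefficient $\tfrac{n-2}{n-1}$ in front of $f|C_{ijk}|^{2}$ does not come from ``an algebraic identity relating $|\nabla Ric|^{2}$ to $|C_{ijk}|^{2}$'': no such pointwise identity exists, since $|C|^{2}=2|\nabla Ric|^{2}-2\nabla_iR_{jk}\nabla_jR_{ik}$ involves an independent cross term.

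The missing ingredient is the first-order consequence of the $V$-static equation (Lemma \ref{L1}), equivalently the identity $fC_{ijk}=T_{ijk}+W_{ijkl}\nabla_{l}f$ with $T_{ijk}$ built algebraically from $Ric$, $g$ and $\nabla f$. This is what (i) converts $f$ times derivatives of $Ric$ into curvature contracted with $\nabla f$, producing the specific coefficient $\tfrac{n-2}{n-1}$ and the term $-\tfrac{n-2}{n-1}W_{ijkl}\nabla_{l}fC_{ijk}$ through the relation $f|C_{ijk}|^{2}=\tfrac{2(n-1)}{n-2}R_{ik}\nabla_{j}fC_{ijk}+W_{ijkl}\nabla_{l}fC_{ijk}$, and (ii) permits the elimination of $\langle\nabla f,\nabla|Ric|^{2}\rangle$ and of the leftover divergences: the paper achieves this by computing the divergence of the vector field $X_{i}=\nabla_{j}fR_{ik}R_{jk}+R_{ijkl}\nabla_{l}fR_{jk}$ in two different ways (Lemma \ref{lemadiv2}), and it is precisely there that the $\tfrac{n\kappa}{n-1}|\mathring{Ric}|^{2}$ term originates. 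Without this identity, or some equivalent device exploiting the overdetermined structure of (\ref{eqVstatic}) beyond its trace and traceless parts, the terms remaining after your Weitzenb\"ock step cannot be reduced to the stated right-hand side, so the proof as outlined does not close.
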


Remembering that three-dimensional Riemannian manifolds have vanish Weyl tensor it is easy to see that Theorem \ref{thmMainA} is a generalization, for any dimension, of Theorem 3 in \cite{BDRR} as well as Proposition 12 in \cite{Lucas}.

Before presenting a couple of applications of the above formula it is fundamental to remember that a Riemannian manifold $(M^{n},\,g)$ has {\it zero radial Weyl curvature} when
\begin{equation}\label{Wradialflat}
W(\,\cdot\,,\,\cdot\,,\,\cdot\,,\nabla f)=0,
\end{equation} for a suitable potential function $f$ on $M^n.$ This class of manifolds includes the case of locally conformally flat manifolds. Moreover, this condition have been used to classify gradient Ricci solitons as well as quasi-Einstein manifolds (cf. \cite{catino,PW} and \cite{pwylie}). Here, we shall use this condition to obtain the following corollary.

\begin{corollary}\label{corA}
Let $(M^{n},\,g,\,f)$ be a compact, oriented, connected Miao-Tam critical metric with positive scalar curvature and nonnegative potential function $f.$ Suppose that:
\begin{itemize}
\item $M^n$ has zero radial Weyl curvature and
\item $|\mathring{Ric}|^{2}\leq\frac{R^{2}}{n(n-1)}.$
\end{itemize}  Then $M^n$ must be isometric to a geodesic ball in $\mathbb{S}^n.$
\end{corollary}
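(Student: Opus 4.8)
The plan is to integrate the B\"ochner-type identity of Theorem \ref{thmMainA} over $M$, specialized to the Miao-Tam case $\kappa=1$, and to read off the pointwise vanishing of $\mathring{Ric}$ from a balance of non-negative integrals. Since $(M^n,g,f)$ is Miao-Tam critical we have $f^{-1}(0)=\partial M$, so $f\equiv 0$ on $\partial M$; integrating the left-hand side and applying the divergence theorem therefore gives
$$\frac12\int_M \operatorname{div}\bigl(f\nabla|Ric|^2\bigr)\,dM=\frac12\int_{\partial M} f\,\langle\nabla|Ric|^2,\nu\rangle\,dS=0.$$
Hence the integral of the whole right-hand side vanishes, and the single term carrying $|\mathring{Ric}|^2$ \emph{without} an $f$-factor, namely $\frac{n}{n-1}\int_M|\mathring{Ric}|^2$, is isolated as a non-negative quantity that must be balanced against the remaining, $f$-weighted, integrals.

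Next I would dispose of two of the $f$-weighted terms using the hypotheses. The zero radial Weyl curvature condition $W(\,\cdot\,,\,\cdot\,,\,\cdot\,,\nabla f)=0$ annihilates $-\frac{n-2}{n-1}W_{ijkl}\nabla_l f\,C_{ijk}$ outright. For the cubic term I would invoke the sharp algebraic (Okumura-type) inequality $\operatorname{tr}(\mathring{Ric}^3)\ge -\frac{n-2}{\sqrt{n(n-1)}}|\mathring{Ric}|^3$ together with the pinching hypothesis $|\mathring{Ric}|\le R/\sqrt{n(n-1)}$; a one-line computation then yields
$$\frac{2}{n-1}R\,|\mathring{Ric}|^2+\frac{2n}{n-2}\operatorname{tr}(\mathring{Ric}^3)\ \ge\ 0.$$
Since $\frac{n-2}{n-1}|C|^2\ge 0$, $|\nabla Ric|^2\ge 0$ and $f\ge 0$, the only sign-indefinite contribution left is the Weyl-Ricci-Ricci term $-2\int_M f\,W_{ijkl}R_{ik}R_{jl}=-2\int_M f\,W_{ijkl}\mathring R_{ik}\mathring R_{jl}$, the traces dropping out by total trace-freeness of $W$; the goal becomes to show that the full integrand is non-negative.

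The hard part will be controlling this last term, since $|W|$ is not bounded by the hypotheses. My plan is to exploit the Miao-Tam structure. Differentiating the fundamental equation $Hess\,f=(\Delta f+1)g+f\,Ric$, commuting derivatives via the Ricci identity, and using that $R$ is constant yields a contracted identity of the schematic form $f\,C_{ijk}=R_{ijkl}\nabla^l f+(\text{lower-order }Ric,\nabla f,\nabla\Delta f\text{ terms})$. Decomposing $R_{ijkl}$ into its Weyl and Schouten parts and applying zero radial Weyl curvature to kill $W_{ijkl}\nabla^l f$ shows that $\nabla f$ is an eigenvector of $Ric$ and expresses $fC$ through $Ric(\nabla f,\cdot)$ and gradient terms alone. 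In the adapted frame $e_1=\nabla f/|\nabla f|$, this eigenvector property together with $W_{ijk1}=0$ forces every component of $W_{ijkl}\mathring R_{ik}\mathring R_{jl}$ involving the radial direction to vanish; an integration by parts using $\nabla^l W_{ijkl}=\tfrac{n-3}{n-2}C_{ijk}$ then rewrites $\int_M f\,W_{ijkl}\mathring R_{ik}\mathring R_{jl}$ in terms of the Cotton tensor, so that it can be absorbed into, or dominated by, the non-negative $\frac{n-2}{n-1}\int_M f|C|^2$ term. I expect this reconciliation of the Weyl-Ricci-Ricci term with the Cotton term to be the principal technical obstacle.

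Once the right-hand side is organized as a sum of non-negative integrals, the vanishing of its total forces each summand to vanish; in particular $\int_M|\mathring{Ric}|^2=0$, so $g$ is Einstein. Since a connected Einstein Miao-Tam critical metric is, by the classification of Miao and Tam \cite{miaotam}, isometric to a geodesic ball in a simply connected space form, and since $R>0$, I conclude that $M^n$ is isometric to a geodesic ball in $\mathbb{S}^n$, as claimed.
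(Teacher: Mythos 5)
Your overall strategy is exactly the paper's: integrate the identity of Theorem \ref{thmMainA} with $\kappa=1$, kill the boundary term since $f=0$ on $\partial M$, drop the $W_{ijkl}\nabla_l f\,C_{ijk}$ term by the radial Weyl hypothesis, control the cubic term by Okumura's inequality plus the pinching, and isolate $\frac{n}{n-1}\int_M|\mathring{Ric}|^2$ to force $g$ to be Einstein, after which Miao--Tam's classification (Theorem 1.1 in \cite{miaotamTAMS}) finishes the argument. All of those steps are carried out correctly in your write-up.

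The one genuine gap is precisely the step you flag as the ``principal technical obstacle'': you never actually dispose of $-2\int_M fW_{ijkl}R_{ik}R_{jl}$, and part of your sketched route to it is wrong --- zero radial Weyl curvature does \emph{not} imply that $\nabla f$ is an eigenvector of $Ric$, nor is that needed. Your fallback idea (integrate by parts through ${\rm div}\,W\sim C$ and absorb into the Cotton term) is, however, exactly what works, and the paper executes it as a pointwise identity rather than an integral one: since $W_{ijkl}\nabla_lf=0$, one has
\begin{equation*}
0=\nabla_{i}\big(W_{ijkl}\nabla_{k}fR_{jl}\big)=\nabla_{i}W_{ijkl}\nabla_{k}fR_{jl}+W_{ijkl}\nabla_{i}\nabla_{k}fR_{jl}
=\frac{n-3}{n-2}C_{klj}\nabla_{k}fR_{jl}+fW_{ijkl}R_{ik}R_{jl},
\end{equation*}
using (\ref{cottonwyel}), the critical metric equation and the total trace-freeness of $W$. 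Since under zero radial Weyl curvature (\ref{auxT}) gives $fC_{ijk}=T_{ijk}$, and $C_{ijk}T_{ijk}=\frac{2(n-1)}{n-2}C_{ijk}R_{ik}\nabla_jf$, this yields $fW_{ijkl}R_{ik}R_{jl}=\frac{n-3}{2(n-1)}f|C_{ijk}|^{2}$. The Weyl--Ricci--Ricci term therefore contributes $-\frac{n-3}{n-1}f|C|^{2}$, which combined with $\frac{n-2}{n-1}f|C|^{2}$ leaves the non-negative quantity $\frac{1}{n-1}f|C|^{2}$; the coefficient comes out favorably, as you hoped but did not check. With that identity supplied, your proof closes and coincides with the paper's.
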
 

It is not difficult to see that the above result generalizes Corollary 1 in \cite{BDRR}. Next, we get the following result for static spaces.

\begin{corollary}\label{corB}
Let $(M^{n},\,g,\,f)$ be a compact, oriented, connected positive static triple with positive scalar curvature. Suppose that:
\begin{itemize}
\item $M^n$ has zero radial Weyl curvature and
\item $|\mathring{Ric}|^{2}\leq\frac{R^{2}}{n(n-1)}.$
\end{itemize} Then one of the following assertions holds:

\begin{enumerate}
\item $M^n$ is equivalent to the standard hemisphere of $\Bbb{S}^n;$ or
\item $|\mathring{Ric}|^{2}=\frac{R^{2}}{{n(n-1)}}$ and $(M^{n},\,g,\,f)$ is covered by a static triple that is
equivalent to the standard cylinder.
\end{enumerate}
\end{corollary}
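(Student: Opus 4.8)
The plan is to specialize the B\"ochner formula of Theorem \ref{thmMainA} to the static case and then integrate it. Since $(M^n,g,f)$ is a positive static triple we have $\kappa=0$, the scalar curvature $R$ is a positive constant, and the static equation gives $Hess\,f=f\big(Ric-\tfrac{R}{n-1}g\big)$; equivalently $f\,\mathring R_{ik}=\nabla_i\nabla_k f-\tfrac{\Delta f}{n}g_{ik}$. With $\kappa=0$, Theorem \ref{thmMainA} becomes
\begin{align*}
\tfrac12\,{\rm div}\big(f\nabla|Ric|^2\big)&=\Big(\tfrac{n-2}{n-1}|C_{ijk}|^2+|\nabla Ric|^2\Big)f+\Big(\tfrac{2}{n-1}R|\mathring{Ric}|^2+\tfrac{2n}{n-2}tr(\mathring{Ric}^3)\Big)f\\
&\quad-\tfrac{n-2}{n-1}W_{ijkl}\nabla_l f\,C_{ijk}-2fW_{ijkl}R_{ik}R_{jl}.
\end{align*}
I would integrate this identity over the compact manifold $M$. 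Because $f$ vanishes on $\partial M$, the divergence term yields only a boundary integral proportional to $f$ and hence vanishes; and because $M$ has zero radial Weyl curvature, the hypothesis $W(\cdot,\cdot,\cdot,\nabla f)=0$ annihilates the term $W_{ijkl}\nabla_l f\,C_{ijk}$ immediately.

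The first substantial step is to dispose of the surviving Weyl term $-2\int_M fW_{ijkl}R_{ik}R_{jl}$. Using $W_{ijkl}g_{ik}=0$ and the relation $f\,\mathring R_{jl}=\nabla_j\nabla_l f-\tfrac{\Delta f}{n}g_{jl}$ above, I would rewrite $fW_{ijkl}R_{ik}R_{jl}=W_{ijkl}\mathring R_{ik}\nabla_j\nabla_l f$ and integrate by parts in the index $j$. The boundary contribution is proportional to $W_{ijkl}\mathring R_{ik}\nu_j\nabla_l f$; since $\nabla f$ is a nonzero normal multiple of $\nu$ along $\partial M$, this vanishes by the radial condition. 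Among the two resulting interior terms, the one contracting $W$ against $\nabla_j\mathring R_{ik}$ again carries the factor $W_{ijkl}\nabla_l f$ and so vanishes, leaving an integral of the form $\int_M(\nabla^j W_{ijkl})\,\mathring R_{ik}\,\nabla_l f\,dV$, which the contracted second Bianchi identity rewrites as an integral coupling the Cotton tensor, the traceless Ricci tensor and $\nabla f$. \textbf{I expect this to be the main obstacle:} controlling the sign of that Cotton integral. To handle it I would use the algebraic identity that the static equation together with zero radial Weyl curvature produces for $f\,C_{ijk}$—expressing it purely in terms of $\nabla f$, $Ric$ and $R$—so that the remaining Weyl contribution cancels or is absorbed into the nonnegative $|C_{ijk}|^2$ and $|\nabla Ric|^2$ terms.

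Once the Weyl terms are accounted for, the integrated identity collapses to
\begin{equation*}
0=\int_M\Big(\tfrac{n-2}{n-1}|C_{ijk}|^2+|\nabla Ric|^2\Big)f\,dV+\int_M\Big(\tfrac{2}{n-1}R|\mathring{Ric}|^2+\tfrac{2n}{n-2}tr(\mathring{Ric}^3)\Big)f\,dV.
\end{equation*}
Here I would invoke the sharp inequality $tr(\mathring{Ric}^3)\ge-\tfrac{n-2}{\sqrt{n(n-1)}}|\mathring{Ric}|^3$, valid for any trace-free symmetric tensor, to bound the second integrand below by $\tfrac{2}{n-1}|\mathring{Ric}|^2\big(R-\sqrt{n(n-1)}\,|\mathring{Ric}|\big)f$. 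The pinching hypothesis $|\mathring{Ric}|^2\le\tfrac{R^2}{n(n-1)}$ is exactly $\sqrt{n(n-1)}\,|\mathring{Ric}|\le R$, so this lower bound is nonnegative. As $f\ge0$, every integrand is nonnegative, whence all of them vanish identically.

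It remains to read off the two alternatives. If $\mathring{Ric}\equiv0$, then $(M^n,g)$ is Einstein, and a compact Einstein positive static triple with positive scalar curvature is the standard hemisphere of $\mathbb{S}^n$ by the Obata-type theorem of Reilly recalled after Conjecture \ref{conjA}, giving assertion (1). Otherwise $\mathring{Ric}\not\equiv0$, and the vanishing of the second integrand forces equality $|\mathring{Ric}|^2=\tfrac{R^2}{n(n-1)}$ on the dense set $\{f>0\}$, together with equality in the cubic inequality; the latter means $\mathring{Ric}$ has exactly one eigenvalue distinct from the remaining $(n-1)$ equal ones. Combined with $|\nabla Ric|^2 f\equiv0$ and the static structure, this rigidity pins down the local geometry as a product over $\mathbb{S}^{n-1}$ and identifies $(M^n,g,f)$, up to covering, with the standard cylinder, which is assertion (2).
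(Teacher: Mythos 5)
Your overall strategy coincides with the paper's: integrate the B\"ochner formula of Theorem \ref{thmMainA} with $\kappa=0$, kill the boundary and radial-Weyl terms, apply Okumura's inequality $tr(\mathring{Ric}^{3})\geq-\frac{n-2}{\sqrt{n(n-1)}}|\mathring{Ric}|^{3}$ together with the pinching hypothesis, and split into the Einstein case (Reilly) and the equality case. However, you leave unresolved exactly the two points that carry the real content. First, the term $-2fW_{ijkl}R_{ik}R_{jl}$: you correctly identify it as the main obstacle but only say you expect it to ``cancel or be absorbed.'' The paper disposes of it with a short pointwise computation: since $W_{ijkl}\nabla_{l}f=0$, one has $0=\nabla_{i}(W_{ijkl}\nabla_{k}fR_{jl})=\frac{n-3}{n-2}C_{klj}\nabla_{k}fR_{jl}+fW_{ijkl}R_{ik}R_{jl}$, and combining this with the identity $fC_{ijk}=T_{ijk}+W_{ijkl}\nabla_{l}f=T_{ijk}$ yields $fW_{ijkl}R_{ik}R_{jl}=\frac{n-3}{2(n-1)}f|C_{ijk}|^{2}$. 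Consequently the coefficient of $f|C_{ijk}|^{2}$ in the integrated identity is $\frac{n-2}{n-1}-\frac{n-3}{n-1}=\frac{1}{n-1}$, not the $\frac{n-2}{n-1}$ you display; it is still positive, which is what saves the argument, but this must be checked rather than assumed, since an absorption that flipped the sign would destroy the proof.

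Second, the equality case. From the vanishing of all integrands you correctly extract $|\mathring{Ric}|^{2}=\frac{R^{2}}{n(n-1)}$, $C_{ijk}=0$ and $\nabla Ric=0$, but the assertion that ``this rigidity pins down the local geometry as a product over $\mathbb{S}^{n-1}$'' is not a proof. The paper closes this gap by showing the metric is Bach-flat: with $C=0$ one has $(n-2)B_{ij}=W_{ikjl}R_{kl}$, the static equation converts this into $(n-2)fB_{ij}=W_{ikjl}\nabla_{k}\nabla_{l}f=\nabla_{k}(W_{ijkl}\nabla_{l}f)-\nabla_{k}W_{ikjl}\nabla_{l}f$, and the radial Weyl condition together with (\ref{cottonwyel}) forces $B_{ij}=0$. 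One then invokes the Kobayashi--Lafontaine--Qing--Yuan classification (Theorem \ref{classsifStatic}) of Bach-flat positive static triples and rules out the hemisphere and Schwarzschild alternatives because $\mathring{Ric}\neq 0$ is covariantly constant of the prescribed norm. Without some such concrete mechanism (Bach-flatness plus the classification, or a full de Rham splitting argument using the eigenvalue structure from equality in Okumura), your conclusion (2) is only asserted, not established.
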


\begin{remark}
It is worthwhile to remark that Corollary \ref{corB} can be seen as a partial answer to Conjecture \ref{conjA}.
\end{remark}

\begin{remark}
\label{rem2}
We also point out that four-dimensional $V$-static spaces with zero radial Weyl curvature must be locally conformally flat. To prove this claim it suffices to apply the same arguments used in the initial part of the proof of Theorem 2 in \cite{BDR}.
\end{remark}

In order to proceed, we recall that a classical lemma due to Berger guarantees that any two symmetric tensor $T$ on a Riemannian manifold $(M^{n},\,g)$ with non-negative sectional curvature must satisfy

\begin{equation}
\label{eqBerger}
(\nabla_{i}\nabla_{j}T_{ik}-\nabla_{j}\nabla_{i}T_{ik})T_{jk}\geq 0,
\end{equation} In fact, we have $$(\nabla_{i}\nabla_{j}T_{ik}-\nabla_{j}\nabla_{i}T_{ik})T_{jk}=\sum_{i<j}R_{ijij}(\lambda_{i}-\lambda_{j})^{2},$$ where $\lambda_{i's}$ are the eigenvalues of tensor $T$ (cf. Lemma 4.1 in \cite{XiaCao}). Here, we shall use these data jointly with Theorem \ref{thmMainA} to deduce a rigidity result for three-dimensional Miao-Tam critical metrics with non-negative sectional curvature (see also Proposition \ref{thmKl} in Section \ref{SecK} for a version in arbitrary dimension). More precisely, we have established the following result.

\begin{theorem}\label{thmK}
Let $(M^3,\,g,\,f)$ be a three-dimensional compact, oriented, connected Miao-Tam critical metric with smooth boundary $\partial M$ and non-negative sectional curvature, $f$ is also assumed to be nonnegative. Then $M^3$ is isometric to a geodesic ball in a simply connected space form $\Bbb{R}^3$ or $\Bbb{S}^3.$
\end{theorem}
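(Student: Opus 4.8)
The plan is to specialize the Böchner formula of Theorem \ref{thmMainA} to this setting and integrate it. Since $\dim M=3$, the Weyl tensor vanishes identically, so both Weyl terms in Theorem \ref{thmMainA} drop out; moreover a Miao-Tam critical metric satisfies \eqref{eqMiaoTam1}, i.e. $\kappa=1$, and has constant scalar curvature $R$. With these substitutions ($\frac{n-2}{n-1}=\frac12$, $\frac{n\kappa}{n-1}=\frac32$, $\frac{2}{n-1}R=R$, $\frac{2n}{n-2}=6$) the formula becomes
\begin{equation*}
\tfrac12{\rm div}(f\nabla|Ric|^{2})=f\Big(\tfrac12|C_{ijk}|^{2}+|\nabla Ric|^{2}\Big)+\tfrac32|\mathring{Ric}|^{2}+f\big(R|\mathring{Ric}|^{2}+6\,tr(\mathring{Ric}^{3})\big).
\end{equation*}
First I would integrate this over $M^3$ and apply the divergence theorem. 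Because $f^{-1}(0)=\partial M$, the potential $f$ vanishes on $\partial M$, so the boundary contribution $\int_{\partial M}f\,\langle\nabla|Ric|^{2},\nu\rangle\,dS$ is zero, leaving
\begin{equation*}
0=\int_M f\Big(\tfrac12|C_{ijk}|^{2}+|\nabla Ric|^{2}\Big)+\tfrac32\int_M|\mathring{Ric}|^{2}+\int_M f\big(R|\mathring{Ric}|^{2}+6\,tr(\mathring{Ric}^{3})\big).
\end{equation*}

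The strategic observation is that the first integrand is nonnegative (as $f\geq0$), while the middle term $\frac32|\mathring{Ric}|^{2}$ is manifestly nonnegative and, crucially, carries \emph{no} factor of $f$. Hence, if I can establish the pointwise inequality $R|\mathring{Ric}|^{2}+6\,tr(\mathring{Ric}^{3})\geq0$, then all three integrals are nonnegative and must individually vanish; in particular $\int_M|\mathring{Ric}|^{2}=0$ forces $\mathring{Ric}\equiv0$, so $(M^3,g)$ is Einstein. The whole argument therefore reduces to controlling the sign of this cubic term.

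This is where the non-negative sectional curvature hypothesis enters through Berger's lemma \eqref{eqBerger}. I would diagonalize $\mathring{Ric}$, writing its eigenvalues as $\mu_1,\mu_2,\mu_3$ with $\mu_1+\mu_2+\mu_3=0$. In dimension three the curvature is determined by the Ricci tensor, so the sectional curvatures of the eigenplanes are $K_{ij}=\frac{R}{6}-\mu_k$, where $\{i,j,k\}=\{1,2,3\}$; non-negative sectional curvature makes each $K_{ij}\geq0$, whence $\sum_{i<j}K_{ij}(\mu_i-\mu_j)^2\geq0$ by \eqref{eqBerger}. A direct computation, using $\sum_i\mu_i=0$, the identity $\sum_{i<j}\mu_i\mu_j=-\frac12|\mathring{Ric}|^{2}$, and $tr(\mathring{Ric}^{3})=3\mu_1\mu_2\mu_3$, evaluates this Berger expression exactly as
\begin{equation*}
\sum_{i<j}K_{ij}(\mu_i-\mu_j)^2=\tfrac{R}{2}|\mathring{Ric}|^{2}+3\,tr(\mathring{Ric}^{3}),
\end{equation*}
so that $R|\mathring{Ric}|^{2}+6\,tr(\mathring{Ric}^{3})\geq0$ pointwise, precisely the inequality needed above. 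I expect this reconciliation, matching the Berger quantity to the exact cubic coefficient appearing in Theorem \ref{thmMainA}, to be the main technical point; it is what makes all the integrated terms cooperate.

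Finally, having $\mathring{Ric}\equiv0$ means $(M^3,g)$ is Einstein, and in dimension three this is equivalent to constant sectional curvature. The non-negativity of the sectional curvature forces $R\geq0$, which rules out the hyperbolic case and leaves the simply connected models $\Bbb{R}^3$ (when $R=0$) and $\Bbb{S}^3$ (when $R>0$). To conclude that $M^3$ is a \emph{geodesic ball} therein, I would invoke the known classification that an Einstein (equivalently, constant curvature) Miao-Tam critical metric is isometric to a geodesic ball in a simply connected space form, as established by Miao and Tam \cite{miaotam,miaotamTAMS}. Distinguishing the two cases by the sign of $R$ then completes the proof.
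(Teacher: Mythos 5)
Your proposal is correct and follows essentially the same route as the paper: integrate the B\"ochner formula of Theorem \ref{thmMainA} (with $W=0$, $\kappa=1$) and use non-negative sectional curvature via Berger's inequality \eqref{eqBerger} to show the cubic term $\frac{2}{n-1}R|\mathring{Ric}|^{2}+\frac{2n}{n-2}tr(\mathring{Ric}^{3})$ is pointwise non-negative, forcing $\mathring{Ric}\equiv 0$ and reducing to Miao--Tam's classification of Einstein critical metrics. The only difference is presentational: where you verify the pointwise inequality by an explicit eigenvalue computation with $K_{ij}=\frac{R}{6}-\mu_k$, the paper packages the same fact as Lemma \ref{lemmaK1} combined with inequality \eqref{mnb}, valid in all dimensions under the zero radial Weyl condition (Proposition \ref{thmKl}).
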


Finally, we get the following result for positive static triple.

\begin{theorem}
\label{thmstaticKA}
Let $(M^{n},\,g,\,f)$ be a positive static triple with non-negative sectional curvature, zero radial Weyl curvature and scalar curvature $R=n(n-1).$ Then up to a finite quotient $M^{n}$ is isometric to either the standard hemisphere $\Bbb{S}^{n}_{+}$ or the standard cylinder over $\mathbb{S}^{n-1}$ with the product metric described in Theorem \ref{classsifStatic}.
\end{theorem}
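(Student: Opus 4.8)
The plan is to turn the B\"ochner identity of Theorem \ref{thmMainA} into an integral rigidity statement. Since a positive static triple solves (\ref{staticequation}) I set $\kappa=0$, and the zero radial Weyl hypothesis (\ref{Wradialflat}), i.e. $W_{ijkl}\nabla_{l}f=0$, makes the term $-\tfrac{n-2}{n-1}W_{ijkl}\nabla_{l}fC_{ijk}$ vanish. The decisive algebraic step is to recognize the surviving zeroth-order curvature terms as a single Berger quantity. Working in a local frame that diagonalizes $\mathring{Ric}$ with eigenvalues $\lambda_{i}$, and inserting the standard decomposition of the curvature tensor into its Weyl, Ricci and scalar pieces, one verifies the identity
$$\frac{R}{n-1}|\mathring{Ric}|^{2}+\frac{n}{n-2}tr(\mathring{Ric}^{3})-W_{ijkl}R_{ik}R_{jl}=\sum_{i<j}R_{ijij}(\lambda_{i}-\lambda_{j})^{2}.$$
Substituting this into Theorem \ref{thmMainA} collapses the formula to
$$\frac{1}{2}{\rm div}\big(f\nabla|Ric|^{2}\big)=\Big(\frac{n-2}{n-1}|C_{ijk}|^{2}+|\nabla Ric|^{2}\Big)f+2f\sum_{i<j}R_{ijij}(\lambda_{i}-\lambda_{j})^{2}.$$

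Next I would integrate over $M^{n}$. Because $f$ vanishes on $\partial M$, the divergence theorem gives $\int_{M}{\rm div}(f\nabla|Ric|^{2})=\int_{\partial M}f\langle\nabla|Ric|^{2},\nu\rangle=0$, so the whole right-hand side integrates to zero. Every term on the right is nonnegative: $f\geq0$ by hypothesis, the norms are squares, and the Berger quantity is nonnegative by (\ref{eqBerger}) precisely because all sectional curvatures $R_{ijij}$ are nonnegative. Hence the integrand vanishes identically, and since $f>0$ in the interior this forces $\nabla Ric\equiv0$, $C_{ijk}\equiv0$, and $R_{ijij}(\lambda_{i}-\lambda_{j})^{2}\equiv0$ for every pair $i<j$.

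It remains to extract the geometry from $\nabla Ric\equiv0$. A parallel Ricci tensor has constant eigenvalues and parallel eigendistributions, so up to a finite quotient $M^{n}$ is a Riemannian product of Einstein factors. If $\mathring{Ric}\equiv0$ then $M$ is Einstein with $Ric=(n-1)g$ and the Obata/Reilly theorem recalled in the Introduction identifies it with the standard hemisphere. Otherwise $Ric$ has at least two distinct eigenvalues; feeding the product structure into the static equation written as $Hess\,f=f(\mathring{Ric}-g)$ shows that the mixed Hessian of $f$ vanishes, so $f$ is nonconstant on a single factor $F$, on which it satisfies an Obata equation, while the complementary factor $\tilde N$ is compact Einstein with $Ric=n\,g$. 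Using the zero radial Weyl condition to kill the off-diagonal Weyl components between $F$ and $\tilde N$ forces $F$ to be one-dimensional, so $M$ is a cylinder $I\times\tilde N$ over a positive Einstein fiber with $f$ a multiple of $\sin(\sqrt{n}\,t)$, which is exactly the non-Einstein model of Theorem \ref{classsifStatic}.

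The step I expect to be the main obstacle is this final identification of the non-Einstein case. The Berger relation only controls sectional curvatures of planes spanned by distinct eigendirections of $\mathring{Ric}$, and gives no information internal to the fiber $\tilde N$, so a priori the splitting delivers a cylinder over an arbitrary positive Einstein manifold rather than over the round $\mathbb{S}^{n-1}$. When $n\leq4$ this gap is harmless, since an Einstein fiber of dimension at most three automatically has constant curvature; in general one must check that the product is Bach-flat and then invoke the classification in Theorem \ref{classsifStatic} to match it with the stated model, excluding the Schwarzschild solution because its sectional curvature changes sign. Controlling the residual Bach contribution $\tfrac{1}{n-2}R_{kl}W_{i}{}^{k}{}_{j}{}^{l}$ that remains after $C_{ijk}=0$, using the product structure together with (\ref{Wradialflat}), is the delicate computation on which the rigorous version of this last step rests.
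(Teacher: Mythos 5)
Your first two thirds coincide with the paper's argument: the identity you propose for the zeroth-order terms is exactly Lemma \ref{lemmaK1} combined with the Berger computation behind (\ref{mnb}) (the eigenvalue differences of $Ric$ and $\mathring{Ric}$ agree, so your formulation in terms of the $\lambda_i$ of $\mathring{Ric}$ is fine), and the integration step with $f=0$ on $\partial M$ and all terms nonnegative is precisely how the paper concludes $C_{ijk}\equiv 0$ and $\nabla Ric\equiv 0$. (Your coefficient $\tfrac{n-2}{n-1}$ on $|C_{ijk}|^{2}$ is the correct direct substitution; the paper's displayed $\tfrac{1}{n-1}$ is immaterial since only nonnegativity is used.)

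The gap is in your endgame. The de Rham splitting route you sketch is not completed: parallel Ricci only gives a product of Einstein factors, your claim that the zero radial Weyl condition forces the factor containing $\nabla f$ to be one-dimensional is asserted rather than proved, and even granting the cylinder structure the fiber is only known to be Einstein, not round $\mathbb{S}^{n-1}$ -- you acknowledge all of this and defer the resolution to a ``delicate computation'' of the residual Bach term. That computation is in fact the whole point, and it is short; the paper does it directly without any splitting. Once $C_{ijk}\equiv 0$, definition (\ref{bach}) gives $(n-2)B_{ij}=\nabla_{k}C_{kij}+W_{ikjl}R_{kl}=W_{ikjl}R_{kl}$; multiplying by $f$ and using the static equation $\nabla_{k}\nabla_{l}f=fR_{kl}+(\Delta f)g_{kl}$ together with the trace-freeness of $W$ yields $(n-2)fB_{ij}=W_{ikjl}\nabla_{k}\nabla_{l}f=\nabla_{k}(W_{ikjl}\nabla_{l}f)-\nabla_{k}W_{ikjl}\,\nabla_{l}f$, and both terms vanish by the zero radial Weyl hypothesis and by $\nabla_{k}W_{ikjl}=-\tfrac{n-3}{n-2}C_{\cdot}=0$. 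Hence $M^{n}$ is Bach-flat, Theorem \ref{classsifStatic} applies, and the Schwarzschild model is excluded because its Ricci tensor is not parallel (equivalently, as you note, its sectional curvature changes sign). You should replace your splitting argument by this computation; as written, the proposal does not reach the stated conclusion.
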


\section{Preliminaries}
\label{Preliminaries}

In this section we shall present some preliminaries which will be useful for the establishment of the desired results. Firstly, we remember that a $V$-static space is a Riemannian manifold $(M^{n},\,g)$ with a non-trivial solution $(f,\kappa)$ satisfying the overdetermined-elliptic system
$$-(\Delta f)g+Hessf-fRic=\kappa g,$$ where $\kappa$ is a constant. As usual, we rewrite in the tensorial language as 

\begin{equation}
\label{fundeqtensV}
-(\Delta f)g_{ij}+\nabla_{i}\nabla_{j}f-fR_{ij}=\kappa g_{ij}.
\end{equation} Tracing (\ref{fundeqtensV}) we deduce that $f$ also satisfies the equation

\begin{equation}
\label{eqtraceV}
\Delta f+\frac{R}{n-1}f+\frac{n\kappa}{n-1}=0.
\end{equation} 

 Moreover, by using (\ref{eqtraceV}) it is not difficult to check that
\begin{equation}
\label{IdRicHess} f\mathring{Ric}=\mathring{Hess f},
\end{equation} where $\mathring{T}$ stands for the traceless of $T.$

Before proceeding we recall two special tensors in the study of curvature for a Riemannian manifold $(M^n,\,g),\,n\ge 3.$  The first one is the Weyl tensor $W$ which is defined by the following decomposition formula
\begin{eqnarray}
\label{weyl}
R_{ijkl}&=&W_{ijkl}+\frac{1}{n-2}\big(R_{ik}g_{jl}+R_{jl}g_{ik}-R_{il}g_{jk}-R_{jk}g_{il}\big) \nonumber\\
 &&-\frac{R}{(n-1)(n-2)}\big(g_{jl}g_{ik}-g_{il}g_{jk}\big),
\end{eqnarray}
where $R_{ijkl}$ stands for the Riemann curvature operator $Rm,$ whereas the second one is the Cotton tensor $C$ given by
\begin{equation}
\label{cotton} \displaystyle{C_{ijk}=\nabla_{i}R_{jk}-\nabla_{j}R_{ik}-\frac{1}{2(n-1)}\big(\nabla_{i}R
g_{jk}-\nabla_{j}R g_{ik}).}
\end{equation} Note that $C_{ijk}$ is skew-symmetric in the first two indices and trace-free in any two indices. These two above tensors are related as follows
\begin{equation}
\label{cottonwyel} \displaystyle{C_{ijk}=-\frac{(n-2)}{(n-3)}\nabla_{l}W_{ijkl},}
\end{equation}provided $n\ge 4.$

For our purpose we also remember that as consequence of Bianchi identity we have
\begin{equation}\label{Bianchi}
({\rm div} Rm)_{jkl}=\nabla_kR_{jl}-\nabla_lR_{jk}.
\end{equation} Moreover, from commutation formulas (Ricci
identities), for any Riemannian manifold $(M^{n},\,g)$ we have
\begin{equation}\label{idRicci}
\nabla_i\nabla_j R_{kl}-\nabla_j\nabla_i R_{kl}=R_{ijks}R_{sl}+R_{ijls}R_{ks},
\end{equation} for more details see \cite{chow,Via}. 

To conclude this section, we shall present the following lemma for $V$-static spaces, which was previously obtained in \cite{BDR} for Miao-Tam critical metrics. 

\begin{lemma}
\label{L1}
Let $(M^{n},g)$ be a connected, smooth Riemannian manifold and $f$ is a smooth function on $M^n$ satisfying Eq. (\ref{eqVstatic}). Then we have:
$$f(\nabla_{i}R_{jk}-\nabla_{j}R_{ik})=R_{ijkl}\nabla_{l}f+\frac{R}{n-1}(\nabla_{i}fg_{jk}-\nabla_{j}fg_{ik})-(\nabla_{i}fR_{jk}-\nabla_{j}f R_{ik}).$$
\end{lemma}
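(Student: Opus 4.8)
The plan is to produce the stated identity directly from the $V$-static equation (\ref{fundeqtensV}) by differentiating, antisymmetrizing in the first two indices, and then recognizing the resulting second-order derivatives of $f$ as curvature terms via the Ricci identity. The key point is that the left-hand side $f(\nabla_i R_{jk}-\nabla_j R_{ik})$ is essentially the Cotton tensor up to a scalar-curvature correction, and the scalar curvature $R$ is constant (recalled in the introduction, from Proposition 2.1 of \cite{CEM}), so $\nabla R = 0$ and all the $\nabla_i R\, g_{jk}$ terms in (\ref{cotton}) drop out.

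First I would rewrite (\ref{fundeqtensV}) to isolate the Hessian, namely
$$\nabla_i\nabla_j f = \kappa g_{ij} + (\Delta f) g_{ij} + f R_{ij},$$
and take a covariant derivative $\nabla_k$ of this expression. Since $\kappa$ is constant and $g$ is parallel, differentiating yields
$$\nabla_k\nabla_i\nabla_j f = (\nabla_k \Delta f)\, g_{ij} + (\nabla_k f) R_{ij} + f\,\nabla_k R_{ij}.$$
Next I would antisymmetrize this in the pair $(k,i)$, i.e. subtract the same expression with $k$ and $i$ interchanged. On the left this produces $\nabla_k\nabla_i\nabla_j f - \nabla_i\nabla_k\nabla_j f$, which by the commutation formula for third derivatives of a function equals $R_{kijs}\nabla_s f$ (the Ricci identity applied to the one-form $\nabla f$). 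On the right, the $(\nabla\Delta f)\,g$ and the $f\,\nabla R$ Hessian-type pieces reorganize into $(\nabla_k f) R_{ij} - (\nabla_i f) R_{kj}$ together with $f(\nabla_k R_{ij}-\nabla_i R_{ij})$ and a $g_{ij}$-proportional leftover involving $\nabla(\Delta f)$.

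The main obstacle, and the step requiring the most care, is handling that leftover $g$-term: one must use the trace equation (\ref{eqtraceV}), which with $R$ constant gives $\nabla_k(\Delta f) = -\tfrac{R}{n-1}\nabla_k f$, to convert $(\nabla_k\Delta f)g_{ij}-(\nabla_i\Delta f)g_{kj}$ into precisely $\tfrac{R}{n-1}\bigl((\nabla_i f)g_{kj}-(\nabla_k f)g_{ij}\bigr)$, matching the scalar-curvature term in the claimed formula. After relabeling the free indices to match the statement (sending the antisymmetrized pair $(k,i)$ to $(i,j)$ and the remaining slot to $k$), every term should line up, yielding $f(\nabla_i R_{jk}-\nabla_j R_{ik})$ on one side and the curvature expression $R_{ijkl}\nabla_l f + \tfrac{R}{n-1}(\nabla_i f\, g_{jk}-\nabla_j f\, g_{ik}) - (\nabla_i f\, R_{jk}-\nabla_j f\, R_{ik})$ on the other. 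The verification that all the remaining symmetric $g_{ij}$-proportional terms cancel under the antisymmetrization is routine but is where an index slip would most easily occur, so I would carry it out explicitly.
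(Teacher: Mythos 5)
Your proposal is correct and follows essentially the same route as the paper: differentiate the $V$-static equation, use the trace identity (\ref{eqtraceV}) to replace $\nabla(\Delta f)$ by $-\tfrac{R}{n-1}\nabla f$, and antisymmetrize via the Ricci identity applied to $\nabla f$. The only differences are cosmetic (order of substitution and index labels), and your sign bookkeeping is consistent with the paper's curvature conventions in (\ref{idRicci}).
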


\begin{proof} The proof is standard, and it follows the same steps of Lemma 1 in \cite{BDR}. For sake of completeness we shall sketch it here. Firstly, since $g$ is parallel we may use  (\ref{fundeqtensV}) to infer

\begin{equation}
\label{eq1lem1}
(\nabla_{i}f)R_{jk}+f\nabla_{i}R_{jk}=\nabla_{i}\nabla_{j}\nabla_{k}f-(\nabla_{i}\Delta f)g_{jk}.
\end{equation} Next, since $M^n$ has constant scalar curvature we have from (\ref{eqtraceV}) that $$\nabla_{i}\Delta f=-\frac{R}{n-1}\nabla_{i}f,$$ which substituted into (\ref{eq1lem1}) gives 

\begin{equation}
\label{eq2lem2}
f\nabla_{i}R_{jk}=-(\nabla_{i}f)R_{jk}+\nabla_{i}\nabla_{j}\nabla_{k}f+\frac{R}{n-1}\nabla_{i}fg_{jk}.
\end{equation} Finally, we apply the Ricci identity to arrive at
\begin{eqnarray*}
f(\nabla_{i}R_{jk}-\nabla_{j}R_{ik})=R_{ijkl}\nabla_{l}f+\frac{R}{n-1}(\nabla_{i}f g_{jk}-\nabla_{j}f g_{ik})-(\nabla_{i}f R_{jk}-\nabla_{j}f R_{ik}),
\end{eqnarray*} as we wanted to prove.
\end{proof}

\section{A B\"ochner type Formula and Applications}

In this section we shall provide a general B\"ochner type formula, which enables us to prove some rigidity results for $V$-static spaces. To do so, we shall obtain some identities involving the Cotton tensor and Weyl tensor on Riemannian manifolds satisfying the $V$-static equation. Following the notation employed in \cite{BDR}, we can use (\ref{weyl}) jointly with Lemma~\ref{L1} to obtain

\begin{equation}\label{auxT}
fC_{ijk}=T_{ijk}+W_{ijkl}\nabla_{l}f,
\end{equation} where the auxiliary tensor $T_{ijk}$ is defined as

\begin{eqnarray}\label{TensorT}
T_{ijk}&=&\frac{n-1}{n-2}(R_{ik}\nabla_{j}f-R_{jk}\nabla_{i}f)+\frac{1}{n-2}(g_{ik}R_{js}\nabla_{s}f-g_{jk}R_{is}\nabla_{s}f)\nonumber\\
&&-\frac{R}{n-2}(g_{ik}\nabla_{j}f-g_{jk}\nabla_{i}f).
\end{eqnarray}

In the sequel, we obtain a divergent formula for any Riemannian manifold $(M^{n},g)$ with constant scalar curvature.

\begin{lemma}\label{lemadiv1}
Let $(M^{n},g)$ be a connected Riemannian manifold with constant scalar curvature and $f:M\rightarrow\mathbb{R}$ is a smooth function defined on $M.$ Then we have:
\begin{eqnarray*}
{\rm div}(f\nabla|Ric|^{2})&=&-f|C_{ijk}|^{2}+2f|\nabla Ric|^{2}+\langle\nabla f,\nabla|Ric|^{2}\rangle+\frac{2n}{n-2}fR_{ij}R_{ik}R_{jk}\\
&&-\frac{4n-2}{(n-1)(n-2)}fR|\mathring{Ric}|^{2}-\frac{2}{n(n-2)}fR^{3}+2\nabla_{i}(fC_{ijk}R_{jk})\\
&&+2C_{ijk}\nabla_{j}fR_{ik}-2fW_{ijkl}R_{ik}R_{jl}.
\end{eqnarray*}
\end{lemma}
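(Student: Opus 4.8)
The plan is to establish Lemma \ref{lemadiv1} by a direct computation starting from the left-hand side and expanding everything in terms of the Cotton and Weyl tensors using the structural identities already collected. First I would compute
\begin{equation*}
\tfrac{1}{2}{\rm div}(f\nabla|Ric|^{2})=\tfrac{1}{2}\langle\nabla f,\nabla|Ric|^{2}\rangle+\tfrac{f}{2}\Delta|Ric|^{2},
\end{equation*}
so the problem reduces to computing $\Delta|Ric|^{2}$. Applying the Bochner-type expansion $\tfrac12\Delta|Ric|^{2}=|\nabla Ric|^{2}+R_{jk}\Delta R_{jk}$, the main task becomes controlling the term $R_{jk}\Delta R_{jk}=R_{jk}\nabla_{i}\nabla_{i}R_{jk}$.

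The key step is to rewrite $\nabla_{i}\nabla_{i}R_{jk}$ using the second contracted Bianchi identity and the definition of the Cotton tensor. Since the scalar curvature is constant, the Cotton tensor \eqref{cotton} simplifies to $C_{ijk}=\nabla_{i}R_{jk}-\nabla_{j}R_{ik}$, and its divergence $\nabla_{i}C_{ijk}$ can be related to $\Delta R_{jk}$ together with the commutator terms $\nabla_{i}\nabla_{j}R_{ik}-\nabla_{j}\nabla_{i}R_{ik}$ coming from the Ricci identity \eqref{idRicci}. Contracting \eqref{idRicci} and using $\nabla_{i}R_{ik}=\tfrac12\nabla_{k}R=0$, I would obtain an expression for $R_{jk}\Delta R_{jk}$ in terms of $|C_{ijk}|^{2}$, a $\nabla C$ divergence term, and curvature-cubic contractions $R_{ijks}R_{jk}R_{sl}$-type terms. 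The purely algebraic curvature-cubic pieces then get decomposed via the Weyl identity \eqref{weyl}: substituting \eqref{weyl} into the Riemann-Ricci-Ricci contraction splits it into a Weyl part $W_{ijkl}R_{ik}R_{jl}$ and several Ricci-only contractions that combine into the terms $\tfrac{2n}{n-2}R_{ij}R_{ik}R_{jk}$, $-\tfrac{4n-2}{(n-1)(n-2)}R|\mathring{Ric}|^{2}$, and $-\tfrac{2}{n(n-2)}R^{3}$ after one expresses $|Ric|^{2}=|\mathring{Ric}|^{2}+\tfrac{R^{2}}{n}$.

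The remaining bookkeeping is to convert the $\nabla C$ contribution into the advertised divergence form. Starting from $R_{jk}\nabla_{i}C_{ijk}$, integrating the derivative by Leibniz gives $\nabla_{i}(C_{ijk}R_{jk})-C_{ijk}\nabla_{i}R_{jk}$, and the antisymmetry of $C_{ijk}$ in $i,j$ lets me rewrite $C_{ijk}\nabla_{i}R_{jk}=\tfrac12|C_{ijk}|^{2}$; multiplying through by $f$ and absorbing $f$ into the divergence produces the $2\nabla_{i}(fC_{ijk}R_{jk})$ and $2C_{ijk}\nabla_{j}fR_{ik}$ terms, with the leftover $-f|C_{ijk}|^{2}$ matching the stated coefficient. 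I expect the main obstacle to be the careful tracking of index contractions and numerical coefficients when decomposing the cubic curvature term through \eqref{weyl}: several Ricci traces must be collected simultaneously, and it is easy to misattribute a factor between the $R|\mathring{Ric}|^{2}$ and $R^{3}$ terms, so I would double-check those by testing the identity on the Einstein case where $\mathring{Ric}=0$ and $C=0$.
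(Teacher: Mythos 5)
Your proposal is correct and is essentially the computation the paper performs, just run in the opposite direction: the paper starts from $f|C_{ijk}|^{2}=2f|\nabla Ric|^{2}-2f\nabla_{i}R_{jk}\nabla_{j}R_{ik}$ and integrates by parts until ${\rm div}(f\nabla|Ric|^{2})$ emerges, whereas you start from ${\rm div}(f\nabla|Ric|^{2})$ and expand $R_{jk}\Delta R_{jk}$ through $\nabla_{i}C_{ijk}$. Both arguments rest on the same three ingredients---$C_{ijk}=\nabla_{i}R_{jk}-\nabla_{j}R_{ik}$ for constant scalar curvature, the commutation identity (\ref{idRicci}) together with $\nabla_{i}R_{ik}=\tfrac12\nabla_{k}R=0$, and the Weyl decomposition (\ref{weyl}) of $R_{ijkl}R_{ik}R_{jl}$---and your handling of the divergence term and of $C_{ijk}\nabla_{i}R_{jk}=\tfrac12|C_{ijk}|^{2}$ reproduces the stated coefficients exactly.
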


\begin{proof}
Firstly, since $M^n$ has constant scalar curvature we immediately get
\begin{eqnarray*}
f|C_{ijk}|^{2}&=&f|\nabla_{i}R_{jk}-\nabla_{j}R_{ik}|^{2}\\
&=&2f|\nabla Ric|^{2}-2f\nabla_{i}R_{jk}\nabla_{j}R_{ik}.
\end{eqnarray*} On the other hand, easily one verifies that
\begin{eqnarray*}
\nabla_{j}(f\nabla_{i}R_{jk}R_{ik})&=&\nabla_{j}f\nabla_{i}R_{jk}R_{ik}+f\nabla_{j}\nabla_{i}R_{jk}R_{ik}\\&&+f\nabla_{i}R_{jk}\nabla_{j}R_{ik}.
\end{eqnarray*} Hence, it follows that
\begin{eqnarray*}
f|C_{ijk}|^{2}&=&2f|\nabla Ric|^{2}-2\nabla_{j}(f\nabla_{i}R_{jk}R_{ik})+2\nabla_{j}f\nabla_{i}R_{jk}R_{ik}\\&&+2f\nabla_{j}\nabla_{i}R_{jk}R_{ik}.
\end{eqnarray*}

Next, from commutation formula for second covariant derivative of the Ricci curvature (see Eq. (\ref{idRicci})) combined with (\ref{cotton}), we deduce

\begin{eqnarray}\label{normC}
f|C_{ijk}|^{2}&=&2f|\nabla Ric|^{2}+2\nabla_{j}f(C_{ijk}+\nabla_{j}R_{ik})R_{ik}\nonumber\\
&&+2f(R_{ij}R_{ik}R_{jl}-R_{ijkl}R_{ik}R_{jl})-2\nabla_{j}(f\nabla_{i}R_{jk}R_{ik})\nonumber\\
&=&2f|\nabla Ric|^{2}+2C_{ijk}\nabla_{j}fR_{ik}+\langle\nabla f,\nabla|Ric|^{2}\rangle\nonumber\\
&&+2f(R_{ij}R_{ik}R_{jk}-R_{ijkl}R_{ik}R_{jl})-2\nabla_{j}(f\nabla_{i}R_{jk}R_{ik}).
\end{eqnarray} Now, substituting (\ref{weyl}) into (\ref{normC}) we achieve

\begin{eqnarray*}
f|C_{ijk}|^{2}&=&2f|\nabla Ric|^{2}+2C_{ijk}\nabla_{j}fR_{ik}+\langle\nabla f,\nabla|Ric|^{2}\rangle+2fR_{ij}R_{ik}R_{jk}\\
&&-2fW_{ijkl}R_{ik}R_{jl}-\frac{2f}{n-2}(2R|Ric|^{2}-2R_{ij}R_{ik}R_{jk})\\
&&+\frac{2Rf}{(n-1)(n-2)}(R^{2}-|Ric|^{2})-2\nabla_{j}(f\nabla_{i}R_{jk}R_{ik})\\
&=&2f|\nabla Ric|^{2}+2C_{ijk}\nabla_{j}fR_{ik}+\langle\nabla f,\nabla|Ric|^{2}\rangle+\frac{2n}{n-2}fR_{ij}R_{ik}R_{jk}\\
&&-2fW_{ijkl}R_{ik}R_{jl}-\frac{(4n-2)}{(n-1)(n-2)}fR|Ric|^{2}+\frac{2}{(n-1)(n-2)}fR^{3}\\
&&-2\nabla_{j}(f\nabla_{i}R_{jk}R_{ik}),\\
\end{eqnarray*} which can be rewritten as
\begin{eqnarray*}
f|C_{ijk}|^{2}&=&2f|\nabla Ric|^{2}+2C_{ijk}\nabla_{j}fR_{ik}+\langle\nabla f,\nabla|Ric|^{2}\rangle+\frac{2n}{n-2}fR_{ij}R_{ik}R_{jk}\nonumber\\
&&-2fW_{ijkl}R_{ik}R_{jl}-\frac{(4n-2)}{(n-1)(n-2)}fR|\mathring{Ric}|^{2}-\frac{2}{n(n-2)}fR^{3}\nonumber\\
&&-2\nabla_{j}(f\nabla_{i}R_{jk}R_{ik})\nonumber\\
&=&2f|\nabla Ric|^{2}+2C_{ijk}\nabla_{j}fR_{ik}+\langle\nabla f,\nabla|Ric|^{2}\rangle+\frac{2n}{n-2}fR_{ij}R_{ik}R_{jk}\nonumber\\
&&-2fW_{ijkl}R_{ik}R_{jl}-\frac{(4n-2)}{(n-1)(n-2)}fR|\mathring{Ric}|^{2}-\frac{2}{n(n-2)}fR^{3}\nonumber\\
&&+2\nabla_{i}(fC_{ijk}R_{jk})-{\rm div}(f\nabla|Ric|^{2}),
\end{eqnarray*} where we used (\ref{cotton}) to justify the second equality. So, the proof is completed.

\end{proof}

Proceeding, we shall deduce another divergent formula, which plays a crucial role in the proof of Theorem~\ref{thmMainA}.

\begin{lemma}\label{lemadiv2}
Let $(M^{n},g,f,\kappa)$ be a $V$-static space. Then we have:
\begin{eqnarray*}
\frac{1}{2}{\rm div}(f\nabla|Ric|^{2})&=&-f|C_{ijk}|^{2}+f|\nabla Ric|^{2}+\langle\nabla f,\nabla|Ric|^{2}\rangle-\frac{n\kappa}{n-1}|\mathring{Ric}|^{2}\\
&&+2\nabla_{i}(fC_{ijk}R_{jk}).
\end{eqnarray*}
\end{lemma}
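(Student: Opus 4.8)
The plan is to prove Lemma \ref{lemadiv2} by combining the general divergence identity of Lemma \ref{lemadiv1} with the specific structure imposed by the $V$-static equation. Since Lemma \ref{lemadiv1} already delivers a formula for ${\rm div}(f\nabla|Ric|^{2})$ valid on any constant-scalar-curvature manifold, and since a $V$-static space indeed has constant scalar curvature (as recalled in the introduction), the task reduces to simplifying the purely algebraic curvature terms on the right-hand side using the $V$-static relations. First I would rewrite the cubic and quadratic Ricci terms of Lemma \ref{lemadiv1}, namely $\frac{2n}{n-2}fR_{ij}R_{ik}R_{jk}$, $-\frac{4n-2}{(n-1)(n-2)}fR|\mathring{Ric}|^{2}$, $-\frac{2}{n(n-2)}fR^{3}$, and $C_{ijk}\nabla_jf R_{ik}$, in terms of the traceless Ricci tensor $\mathring{Ric}$, using $R_{ij}=\mathring{R}_{ij}+\frac{R}{n}g_{ij}$.

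The crucial input should be the auxiliary identity (\ref{auxT}), $fC_{ijk}=T_{ijk}+W_{ijkl}\nabla_lf$, together with the explicit form of $T_{ijk}$ in (\ref{TensorT}). The term $C_{ijk}\nabla_jf R_{ik}$ in Lemma \ref{lemadiv1} is the natural place to introduce this: contracting (\ref{auxT}) against $\nabla_jf\,R_{ik}$ converts the Cotton contraction into an expression involving $T_{ijk}\nabla_jf R_{ik}$ plus a Weyl term $W_{ijkl}\nabla_lf\nabla_jf R_{ik}$, and the latter vanishes by the antisymmetry of $W$ in the pair $(j,l)$ against the symmetric $\nabla_jf\nabla_lf$. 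The tensor $T_{ijk}$ contracted with $\nabla_jf R_{ik}$ then produces, after using the skew-symmetry in $i,j$ and the trace-free properties, terms that can be matched against the other algebraic pieces. I expect these contractions to generate precisely the combination needed to cancel the Weyl-Ricci term $-2fW_{ijkl}R_{ik}R_{jl}$ and to consolidate the remaining Ricci cubics into the clean coefficient $-\frac{n\kappa}{n-1}|\mathring{Ric}|^2$ via the trace equation (\ref{eqtraceV}) and the identity (\ref{IdRicHess}).

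The main obstacle will be the bookkeeping in converting everything into traceless quantities and confirming that the Weyl-dependent terms drop out completely. In particular, one must verify that the algebraic Weyl contraction arising from (\ref{auxT}) exactly absorbs $-2fW_{ijkl}R_{ik}R_{jl}$, so that no Weyl curvature survives in the final formula of Lemma \ref{lemadiv2}; this is plausible because $W_{ijkl}R_{ik}R_{jl}=W_{ijkl}\mathring R_{ik}\mathring R_{jl}$ (the trace parts die against $W$), and the $T$-contraction should reproduce the same expression with opposite sign. I would therefore carry out the following steps in order: (i) invoke constant scalar curvature and quote Lemma \ref{lemadiv1}; (ii) decompose $R_{ij}=\mathring R_{ij}+\frac{R}{n}g_{ij}$ in the cubic and quadratic terms and collect the $|\mathring{Ric}|^2$ and $\mathring R^3$ contributions; (iii) substitute (\ref{auxT}) into the term $2C_{ijk}\nabla_jf R_{ik}$, discard the symmetric Weyl contraction, and expand $T_{ijk}\nabla_jf R_{ik}$ using (\ref{TensorT}); (iv) apply (\ref{IdRicHess}) and the $V$-static equation (\ref{fundeqtensV}) to rewrite the $\nabla f$-contractions of the Ricci tensor, which is where the factor $\kappa$ enters; and (v) verify the cancellation of all Weyl terms and of the scalar-curvature cubic, leaving exactly the stated right-hand side.
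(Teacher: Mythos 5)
Your plan cannot close as described, because the passage from Lemma \ref{lemadiv1} to Lemma \ref{lemadiv2} is not a purely algebraic simplification. Doubling the right-hand side of Lemma \ref{lemadiv2}, subtracting Lemma \ref{lemadiv1}, and rewriting the cubic terms via $R_{ij}=\mathring R_{ij}+\frac{R}{n}g_{ij}$ and the Weyl decomposition, what you would actually have to prove is the identity
\begin{align*}
f|C_{ijk}|^{2}+2f\bigl(R_{ij}R_{jk}R_{ik}-R_{ijkl}R_{ik}R_{jl}\bigr)&+2C_{ijk}\nabla_{j}fR_{ik}\\
&=\langle\nabla f,\nabla|Ric|^{2}\rangle+2\nabla_{i}(fC_{ijk}R_{jk})-\frac{2n\kappa}{n-1}|\mathring{Ric}|^{2},
\end{align*}
whose right-hand side contains the genuinely differential quantities $\nabla_{i}(fC_{ijk}R_{jk})$ and $\langle\nabla f,\nabla|Ric|^{2}\rangle$, and whose left-hand side contains the full Riemann contraction $R_{ijkl}R_{ik}R_{jl}$. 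No amount of traceless decomposition, substitution of (\ref{auxT}), or use of (\ref{IdRicHess}) will manufacture these terms: your steps (ii)--(v) are all pointwise algebra, while the identity above encodes a second round of differentiation (commuting covariant derivatives of $Ric$ and integrating by parts). This is precisely the missing ingredient that the paper supplies: it computes the divergence of the auxiliary field $X_{i}=\nabla_{j}fR_{ik}R_{jk}+R_{ijkl}\nabla_{l}fR_{jk}$ in two independent ways --- once via Lemma \ref{L1}, the contracted Bianchi identity and the $V$-static equations (\ref{fundeqtensV})--(\ref{eqtraceV}), which is where $\kappa$ enters and where $R_{ijkl}\nabla_{i}\nabla_{l}fR_{jk}=fR_{ijkl}R_{il}R_{jk}+\cdots$ produces the Riemann--Ricci contraction, and once via the product rule, the Ricci identity and (\ref{normC}) --- and then equates the two results ((\ref{auxdiv1}) versus (\ref{auxdiv2})). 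Your proposal has no analogue of this double divergence computation.

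There is also a concrete error in your step (iii). The term $W_{ijkl}\nabla_{l}f\nabla_{j}fR_{ik}$ does not vanish: the Weyl tensor is antisymmetric in the pairs $(i,j)$ and $(k,l)$ and symmetric under interchange of these pairs, but it has no antisymmetry between its second and fourth slots. The contraction in question equals $\sum_{i,k}W(e_{i},\nabla f,e_{k},\nabla f)\,R_{ik}$, the pairing of the symmetric trace-free $2$-tensor $W(\cdot,\nabla f,\cdot,\nabla f)$ with $\mathring{Ric}$, which is generically nonzero (it vanishes only under extra hypotheses such as the zero radial Weyl condition, which is not assumed in Lemma \ref{lemadiv2}). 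A smaller point: identity (\ref{auxT}) expresses $fC_{ijk}$, so it can be inserted into $2C_{ijk}\nabla_{j}fR_{ik}$ only after multiplying through by $f$ and arguing on the dense set where $f\neq 0$. Even with these repairs, you would still be left with the displayed differential identity to prove, and the natural way to do so is the paper's two-fold computation of ${\rm div}\,X$.
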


\begin{proof} To start with, we use Lemma \ref{L1} together with Eq. (\ref{cotton}) to infer

\begin{eqnarray*}
\nabla_{i}(\nabla_{j}fR_{ik}R_{jk}+R_{ijkl}\nabla_{l}fR_{jk})&=&\nabla_{i}(\nabla_{j}f R_{ik}R_{jk})\nonumber\\
&&+\nabla_{i}\Big[f C_{ijk}R_{jk}-\frac{R}{n-1}(\nabla_{i}f R-\nabla_{j}f R_{ji})\\
&&+(|Ric|^{2}\nabla_{i}f-\nabla_{j}fR_{ik}R_{jk})\Big].
\end{eqnarray*} Rearranging the terms we immediately deduce

\begin{eqnarray*}
\nabla_{i}(\nabla_{j}fR_{ik}R_{jk}+R_{ijkl}\nabla_{l}fR_{jk})&=&\nabla_{i}(f C_{ijk}R_{jk})\nonumber\\
&&+\nabla_{i}\Big[-\frac{R^{2}}{n-1}\nabla_{i}f +\frac{R}{n-1} R_{ji}\nabla_{j}f+|Ric|^{2}\nabla_{i}f\Big],
\end{eqnarray*} and remembering that $(M^{n},g)$ has constant scalar curvature we use the twice contracted second Bianchi identity ($2{\rm div} Ric=\nabla R=0)$ to get

\begin{eqnarray}
\label{lmn}
\nabla_{i}(\nabla_{j}fR_{ik}R_{jk}+R_{ijkl}\nabla_{l}fR_{jk})&=&\nabla_{i}(f C_{ijk}R_{jk})-\frac{R^{2}}{n-1}\Delta f+\frac{R}{n-1}\nabla_{i}\nabla_{j}f R_{ji}\nonumber\\
&&+|Ric|^{2}\Delta f+\langle\nabla f,\nabla|Ric|^{2}\rangle.
\end{eqnarray} Therefore, substituting (\ref{fundeqtensV}) and (\ref{eqtraceV}) into (\ref{lmn}) we obtain

\begin{eqnarray*}
\nabla_{i}(\nabla_{j}fR_{ik}R_{jk}+R_{ijkl}\nabla_{l}fR_{jk})&=&\nabla_{i}(f C_{ijk}R_{jk})+\langle\nabla f,\nabla|Ric|^{2}\rangle-\frac{R^{2}}{n-1}\Delta f\nonumber\\
&&+\frac{R}{n-1}(f R_{ij}+(\Delta f+\kappa)g_{ij})R_{ji}+\Delta f|Ric|^{2}\nonumber\\
&=&\nabla_{i}(f C_{ijk}R_{jk})+\langle\nabla f,\nabla|Ric|^{2}\rangle\nonumber\\
&&+\frac{R}{n-1}f|Ric|^{2}+\frac{R^{2}\kappa}{n-1} +\frac{-Rf-n\kappa}{n-1}|Ric|^{2}.
\end{eqnarray*} From this, it follows that

\begin{eqnarray}\label{auxdiv1}
\nabla_{i}(\nabla_{j}fR_{ik}R_{jk}+R_{ijkl}\nabla_{l}fR_{jk})&=&\nabla_{i}(f C_{ijk}R_{jk})+\langle\nabla f,\nabla|Ric|^{2}\rangle\nonumber\\&&-\frac{n\kappa}{n-1}|\mathring{Ric}|^{2}.
\end{eqnarray} At the same time, notice that

\begin{eqnarray*}
\nabla_{i}(\nabla_{j}fR_{ik}R_{jk}+R_{ijkl}\nabla_{l}fR_{jk})&=&\nabla_{i}\nabla_{j}fR_{ik}R_{jk}+\nabla_{j}fR_{ik}\nabla_{i}R_{jk}+\nabla_{i}R_{ijkl}\nabla_{l}fR_{jk}\\
&&+R_{ijkl}\nabla_{i}\nabla_{l}fR_{jk}+R_{ijkl}\nabla_{l}f\nabla_{i}R_{jk}.
\end{eqnarray*} Hence, it follows from Lemma~\ref{L1} and (\ref{fundeqtensV}) that

\begin{eqnarray*}
\nabla_{i}(\nabla_{j}fR_{ik}R_{jk}+R_{ijkl}\nabla_{l}fR_{jk})&=&f(R_{ij}R_{ik}R_{jk}-R_{ijkl}R_{ik}R_{jl})+\nabla_{j}fR_{ik}\nabla_{i}R_{jk}\\
&&+C_{ijk}\nabla_{j}fR_{ik}+fC_{ijk}\nabla_{i}R_{jk}\\
&&+(\nabla_{i}fR_{jk}-\nabla_{j}fR_{ik})\nabla_{i}R_{jk}\\
&=&f(R_{ij}R_{ik}R_{jk}-R_{ijkl}R_{ik}R_{jl})+C_{ijk}\nabla_{j}fR_{ik}\\
&&+fC_{ijk}\nabla_{i}R_{jk}+\frac{1}{2}\langle\nabla f,\nabla|Ric|^{2}\rangle.\\
\end{eqnarray*}
Proceeding, we use that the Cotton tensor is skew-symmetric in the first two indices and Eq. (\ref{normC}) to infer

\begin{eqnarray}\label{auxdiv2}
\nabla_{i}(\nabla_{j}fR_{ik}R_{jk}+R_{ijkl}\nabla_{l}fR_{jk})&=&f|C_{ijk}|^{2}-f|\nabla Ric|^{2}+\nabla_{j}(f\nabla_{i}R_{jk}R_{ik})\nonumber\\
&=&f|C_{ijk}|^{2}-f|\nabla Ric|^{2}-\nabla_{i}(fC_{ijk}R_{jk})\nonumber\\
&&+\frac{1}{2}{\rm div}(f\nabla|Ric|^{2}).
\end{eqnarray} Finally, it suffices to compare (\ref{auxdiv1}) and (\ref{auxdiv2}) to get the desired result.
\end{proof}

\subsection{Proof of Theorem~\ref{thmMainA}}
\begin{proof}

First of all, a  straightforward computation using (\ref{auxT}) as well as (\ref{TensorT}) allows us to deduce

\begin{eqnarray*}
f|C_{ijk}|^{2}=\frac{2(n-1)}{(n-2)}R_{ik}\nabla_{j}fC_{ijk}+W_{ijkl}\nabla_{l}fC_{ijk},
\end{eqnarray*} where we have used that $C_{ijk}$ is skew-symmetric in the first two indices and trace-free in any two indices. Whence, substituting this data into Lemma~\ref{lemadiv1} we obtain the following expression

\begin{eqnarray}\label{eqaux}
{\rm div}(f\nabla|Ric|^{2})&=&2f|\nabla Ric|^{2}+\langle\nabla f,\nabla|Ric|^{2}\rangle+\frac{2n}{(n-2)}fR_{ij}R_{ik}R_{jk}-\frac{1}{(n-1)}f|C_{ijk}|^{2}\nonumber\\
&&-\frac{4n-2}{(n-1)(n-2)}fR|\mathring{Ric}|^{2}-\frac{2}{n(n-2)}fR^{3}+2\nabla_{i}(fC_{ijk}R_{jk})\nonumber\\
&&-\frac{(n-2)}{(n-1)}W_{ijkl}\nabla_{l}fC_{ijk}-2fW_{ijkl}R_{ik}R_{jl}.
\end{eqnarray}

Now, comparing the expression obtained in (\ref{eqaux}) with Lemma~\ref{lemadiv2} we arrive at

\begin{eqnarray}\label{auxthm}
\frac{1}{2}{\rm div}(f\nabla|Ric|^{2})&=&\Big(\frac{n-2}{n-1}|C_{ijk}|^{2}+|\nabla Ric|^{2}\Big)f+\frac{n\kappa}{n-1}|\mathring{Ric}|^{2}\nonumber\\
&&+\frac{2n}{n-2}fR_{ij}R_{ik}R_{jk}-\frac{4n-2}{(n-1)(n-2)}fR|\mathring{Ric}|^{2}-\frac{2}{n(n-2)}fR^{3}\nonumber\\
&&-\frac{n-2}{n-1}W_{ijkl}\nabla_{l}fC_{ijk}-2fW_{ijkl}R_{ik}R_{jl}.
\end{eqnarray}

On the other hand, remembering that $\mathring{R}_{ij}=R_{ij}-\frac{R^2}{n}g,$ it is not hard to check that

$$fR_{ij}R_{ik}R_{jk}=f\mathring{R}_{ij}\mathring{R}_{jk}\mathring{R}_{ik}+\frac{3}{n}fR|\mathring{Ric}|^{2}+\frac{fR^{3}}{n^{2}}.$$ This substituted into (\ref{auxthm}) gives 
\begin{eqnarray*}
\frac{1}{2}{\rm div}(f\nabla|Ric|^{2})&=&\Big(\frac{n-2}{n-1}|C_{ijk}|^{2}+|\nabla Ric|^{2}\Big)f+\frac{n\kappa}{n-1}|\mathring{Ric}|^{2}\\
&&+\Big(\frac{2}{n-1}R|\mathring{Ric}|^{2}+\frac{2n}{n-2}tr(\mathring{Ric}^{3})\Big)f\\
&&-\frac{n-2}{n-1}W_{ijkl}\nabla_{l}fC_{ijk}-2fW_{ijkl}R_{ik}R_{jl},
\end{eqnarray*} which finishes the proof of the theorem.

\end{proof}

\subsection{Proof of Corollaries \ref{corA} and  \ref{corB}}

\begin{proof}

In order to prove Corollaries \ref{corA} and \ref{corB}, we recall that the Cotton tensor and the divergence of the Weyl tensor are related as follows
\begin{equation}\label{cottonwyel}
C_{ijk}=-\frac{n-2}{n-3}\nabla_{l}W_{ijkl}.
\end{equation} Notice also that the zero radial Weyl curvature condition, namely, $W_{ijkl}\nabla_{l}f=0$, jointly with (\ref{cottonwyel}) and (\ref{fundeqtensV}) yields

\begin{eqnarray*}
0&=&\nabla_{i}(W_{ijkl}\nabla_{k}fR_{jl})\\&=&\nabla_{i}W_{ijkl}\nabla_{k}fR_{jl}+W_{ijkl}\nabla_{i}\nabla_{k}fR_{jl}\\
&=&\frac{n-3}{n-2}C_{klj}\nabla_{k}fR_{jl}+fW_{ijkl}R_{ik}R_{jl}.
\end{eqnarray*} By using that the Cotton tensor is skew-symmetric in two first indices we obtain

\begin{eqnarray*}
fW_{ijkl}R_{ik}R_{jl}&=&\frac{n-3}{2(n-2)}C_{ijk}(\nabla_{j}fR_{ik}-\nabla_{i}fR_{ik}),
\end{eqnarray*} which can be succinctly rewritten as follows
\begin{equation*}
fW_{ijkl}R_{ik}R_{jl}=\frac{n-3}{2(n-1)}C_{ijk}T_{ijk}.
\end{equation*}From this, it follows from (\ref{auxT})  that

\begin{equation}
\label{eqw12}
fW_{ijkl}R_{ik}R_{jl}=\frac{n-3}{2(n-1)}f|C_{ijk}|^{2}.
\end{equation} Now, comparing (\ref{eqw12}) with Theorem~\ref{thmMainA} we achieve

\begin{eqnarray*}
\frac{1}{2}{\rm div}(f\nabla|Ric|^{2})&=&\Big(\frac{n-2}{n-1}|C_{ijk}|^{2}+|\nabla Ric|^{2}\Big)f+\frac{n\kappa}{n-1}|\mathring{Ric}|^{2}\\
&&+\Big(\frac{2}{n-1}R|\mathring{Ric}|^{2}+\frac{2n}{n-2}tr(\mathring{Ric}^{3})\Big)f\\
&&-\frac{n-3}{n-1}f|C_{ijk}|^{2},
\end{eqnarray*} so that

\begin{eqnarray}
\label{klmn}
\frac{1}{2}{\rm div}(f\nabla|Ric|^{2})&=&\Big(\frac{1}{n-1}|C_{ijk}|^{2}+|\nabla Ric|^{2}\Big)f+\frac{n\kappa}{n-1}|\mathring{Ric}|^{2}\nonumber\\
&&+\Big(\frac{2}{n-1}R|\mathring{Ric}|^{2}+\frac{2n}{n-2} tr(\mathring{Ric}^{3})\Big)f.
\end{eqnarray}

Before proceeding it is important to remember that the classical Okumura's lemma (cf. \cite{Okumura} Lemma 2.1) guarantees

\begin{equation}
\label{okumuradesig}
tr(\mathring{Ric}^{3})\geq-\frac{n-2}{\sqrt{n(n-1)}}|\mathring{Ric}|^{3}.
\end{equation} Therefore, upon integrating Eq. (\ref{klmn}) over $M$  we use (\ref{okumuradesig}) to arrive at

\begin{eqnarray}\label{intaux}
0&\geq&\int_{M}\Big(\frac{1}{n-1}|C_{ijk}|^{2}+|\nabla Ric|^{2}\Big)f dM_{g}+\frac{n\kappa}{n-1}\int_{M}|\mathring{Ric}|^{2} dM_{g}\nonumber\\
&&+\int_{M}\frac{2n}{\sqrt{n(n-1)}}|\mathring{Ric}|^{2}\Big(\frac{R}{\sqrt{n(n-1)}}-|\mathring{Ric}|\Big)f dM_{g}.
\end{eqnarray}

We now suppose that $\kappa=1,$ that is, $(M^{n},\,g)$ is a Miao-Tam critical metric, we may use our assumption into (\ref{intaux}) to conclude that  $|\mathring{Ric}|^{2}=0$ and this forces $M^{n}$ to be Einstein. So, it suffices to apply Theorem 1.1 in \cite{miaotamTAMS} to conclude that $(M^{n},g)$ is isometric to a geodesic ball in $\mathbb{S}^{n}$ and this concludes the proof of Corollary \ref{corA}.

From now on we assume that $\kappa=0,$ that is, $(M^{n},\,g)$ is a static space. In this case, our assumption substituted into (\ref{intaux}) guarantees that either $|\mathring{Ric}|^{2}=0$ or $|\mathring{Ric}|^{2}=\frac{R^2}{n(n-1)}.$ In the first case, we conclude that $(M^{n},\,g)$ is an Einstein manifold. Then, it suffices to apply Lemma 3 in \cite{Reilly2} to conclude that $M^n$ is isometric to a hemisphere of $\Bbb{S}^n.$ In the second one, notice that $M^n$ must have vanish Cotton tensor and parallel Ricci curvature. From this, we can use (\ref{bach}) to infer
$$(n-2)B_{ij}=\nabla_{k}C_{kij}+W_{ikjl}R_{kl}=W_{ikjl}R_{kl},$$
and consequently, by using the static equation, we deduce

\begin{eqnarray*}
(n-2)fB_{ij}&=&W_{ikjl}\nabla_{k}\nabla_{l}f\\
&=&\nabla_{k}(W_{ijkl}\nabla_{l}f)-\nabla_{k}W_{ikjl}\nabla_{l}f.
\end{eqnarray*} Hence, our assumption on Weyl curvature tensor jointly with (\ref{cottonwyel}) yields

\begin{eqnarray*}
(n-2)f B_{ij}&=&-\nabla_{k}W_{jlik}\nabla_{l}f\\
&=&\frac{n-3}{n-2}C_{jli}\nabla_{l}f=0.
\end{eqnarray*} From here it follows that $(M^{n},g)$ is Bach-flat. Hence, the result follows from Theorem \ref{thmstaticKp} (see also  Theorem 1 in \cite{Lucas} for $n=3$). This is what we wanted to prove.

\end{proof}

\section{Critical metrics with non-negative sectional curvature}
\label{SecK}

In the last decades there have been a lot of interesting results concerning the geometry of manifolds with non-negative sectional curvature. In this context, as it was previously mentionded any two symmetric tensor $T$ on a Riemannian manifold $(M^{n},\,g)$ with non-negative sectional curvature must satisfy

\begin{equation}
\label{eqBerger}
(\nabla_{i}\nabla_{j}T_{ik}-\nabla_{j}\nabla_{i}T_{ik})T_{jk}\geq 0.
\end{equation} In fact, we have $$(\nabla_{i}\nabla_{j}T_{ik}-\nabla_{j}\nabla_{i}T_{ik})T_{jk}=\sum_{i<j}R_{ijij}(\lambda_{i}-\lambda_{j})^{2},$$ where $\lambda_{i's}$ are the eigenvalues of tensor $T$ (cf. Lemma 4.1 in \cite{XiaCao}). In particular, choosing $T=Ric$ we immediately get

\begin{equation*}
(\nabla_{i}\nabla_{j}R_{ik}-\nabla_{j}\nabla_{i}R_{ik})R_{jk}\geq 0.
\end{equation*} This combined with (\ref{idRicci}) yields
\begin{equation}
\label{mnb}
R_{ij}R_{jk}R_{ik}-R_{ijkl}R_{jl}R_{ik}\geq 0.
\end{equation}

In the sequel, we shall deduce an useful expression to $R_{ij}R_{jk}R_{ik}-R_{ijkl}R_{jl}R_{ik}$ on any Riemannian manifold.

\begin{lemma}
\label{lemmaK1}
Let $(M^{n},\,g)$ be a Riemannian manifold. Then we have:
$$R_{ij}R_{jk}R_{ik}-R_{ijkl}R_{jl}R_{ik}=\frac{1}{n-1}R|\mathring{Ric}|^{2}+\frac{n}{n-2}tr(\mathring{Ric}^{3})-W_{ijkl}R_{ik}R_{jl}.$$
\end{lemma}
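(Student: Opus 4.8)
The identity is purely algebraic (and pointwise), so the plan is to insert the Weyl decomposition (\ref{weyl}) into the term $R_{ijkl}R_{jl}R_{ik}$ and reduce everything to the scalar invariants $R$, $|Ric|^{2}$, $tr(Ric^{3})$ together with the contracted Weyl term $W_{ijkl}R_{ik}R_{jl}$; then to pass to the trace-free Ricci tensor via $\mathring{R}_{ij}=R_{ij}-\frac{R}{n}g_{ij}$.

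First I would contract (\ref{weyl}) against $R_{ik}R_{jl}$. The Weyl piece reproduces $W_{ijkl}R_{ik}R_{jl}$ directly. For the Schouten part $\frac{1}{n-2}\big(R_{ik}g_{jl}+R_{jl}g_{ik}-R_{il}g_{jk}-R_{jk}g_{il}\big)$, the first two contractions each produce $R|Ric|^{2}$ (using $g^{jl}R_{jl}=R$ and $R_{ik}R_{ik}=|Ric|^{2}$), while the last two each produce $-tr(Ric^{3})$ after contracting the metric factor; hence this part contributes $\frac{2}{n-2}\big(R|Ric|^{2}-tr(Ric^{3})\big)$. The scalar part $-\frac{R}{(n-1)(n-2)}\big(g_{ik}g_{jl}-g_{il}g_{jk}\big)$ contributes $-\frac{R}{(n-1)(n-2)}\big(R^{2}-|Ric|^{2}\big)$. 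Collecting these, and using $R_{ij}R_{jk}R_{ik}=tr(Ric^{3})$, I obtain
$$R_{ij}R_{jk}R_{ik}-R_{ijkl}R_{jl}R_{ik}=\frac{n}{n-2}tr(Ric^{3})-W_{ijkl}R_{ik}R_{jl}-\frac{2n-1}{(n-1)(n-2)}R|Ric|^{2}+\frac{R^{3}}{(n-1)(n-2)}.$$

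It then remains to rewrite the right-hand side in trace-free form. For this I would use $|Ric|^{2}=|\mathring{Ric}|^{2}+\frac{R^{2}}{n}$ together with the cubic identity already recorded in the proof of Theorem~\ref{thmMainA}, namely $tr(Ric^{3})=tr(\mathring{Ric}^{3})+\frac{3}{n}R|\mathring{Ric}|^{2}+\frac{R^{3}}{n^{2}}$. After substitution the coefficient of $R|\mathring{Ric}|^{2}$ collapses to $\frac{3}{n-2}-\frac{2n-1}{(n-1)(n-2)}=\frac{1}{n-1}$, the coefficient of $tr(\mathring{Ric}^{3})$ stays $\frac{n}{n-2}$, and the three residual $R^{3}$ contributions add to zero, giving exactly the claimed formula.

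The argument involves no conceptual difficulty; the only delicate point is the bookkeeping of signs and of which index pairs are contracted in the two $tr(Ric^{3})$ terms, and the verification that every $R^{3}$ term cancels once one passes to $\mathring{Ric}$. That final cancellation is the step most likely to hide an arithmetic slip, so I would confirm it over the common denominator $n(n-1)(n-2)$, where the three numerators sum to $(n-1)-(2n-1)+n=0$.
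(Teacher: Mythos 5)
Your proposal is correct and follows essentially the same route as the paper: substitute the Weyl decomposition into $R_{ijkl}R_{ik}R_{jl}$, collect the invariants $R|Ric|^{2}$, $tr(Ric^{3})$ and $W_{ijkl}R_{ik}R_{jl}$ (your intermediate identity matches the paper's Eq. (\ref{12fg}) exactly), and then pass to $\mathring{Ric}$ via the cubic trace identity. The coefficient checks, including the cancellation $(n-1)-(2n-1)+n=0$ of the $R^{3}$ terms, are all accurate.
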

\begin{proof} By using the definition of the Riemann tensor (\ref{weyl}) we obtain

\begin{eqnarray*}
R_{ij}R_{jk}R_{ik}-R_{ijkl}R_{jl}R_{ik}&=&\frac{n}{n-2}R_{ij}R_{jk}R_{ik}-W_{ijkl}R_{jl}R_{ik}\nonumber\\&&-\frac{(2n-1)}{(n-1)(n-2)}R|Ric|^{2}+\frac{R^{3}}{(n-1)(n-2)}
\end{eqnarray*} so that

\begin{eqnarray}
\label{12fg}
R_{ij}R_{jk}R_{ik}-R_{ijkl}R_{jl}R_{ik}&=&\frac{n}{n-2}R_{ij}R_{jk}R_{ik}-W_{ijkl}R_{jl}R_{ik}\nonumber\\&&-\frac{(2n-1)}{(n-1)(n-2)}R|\mathring{Ric}|^{2}-\frac{1}{n(n-2)}R^{3}.
\end{eqnarray} On the other hand, we already know that 

$$R_{ij}R_{ik}R_{jk}=\mathring{R}_{ij}\mathring{R}_{jk}\mathring{R}_{ik}+\frac{3}{n}R|\mathring{Ric}|^{2}+\frac{R^{3}}{n^{2}}.$$ This substituted into (\ref{12fg}) gives the desired result. 

\end{proof}

Since three-dimensional Riemannian manifolds have vanish Weyl tensor the proof of Theorem \ref{thmK} follows as an immediate consequence of the following slightly stronger result. 

\begin{proposition}\label{thmKl}
Let $(M^n,\,g,\,f)$ be a compact, oriented, connected Miao-Tam critical metric with smooth boundary $\partial M$ and non-negative sectional curvature, $f$ is also assumed to be nonnegative. Suppose that $M^n$ has zero radial Weyl curvature, then $M^n$ is isometric to a geodesic ball in a simply connected space form $\Bbb{R}^n$ or $\Bbb{S}^n.$
\end{proposition}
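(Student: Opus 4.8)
The plan is to combine the Böchner-type formula from Theorem~\ref{thmMainA} with the Berger-type inequality (\ref{mnb}) and Okumura's lemma, specializing to the Miao-Tam case $\kappa=1$. Since $M^n$ has zero radial Weyl curvature, the identity (\ref{eqw12}) gives $fW_{ijkl}R_{ik}R_{jl}=\frac{n-3}{2(n-1)}f|C_{ijk}|^{2}$, so the Weyl terms in the main formula collapse exactly as in the proof of Corollary~\ref{corA}, yielding the reduced identity (\ref{klmn}). This expresses $\frac{1}{2}\mathrm{div}(f\nabla|Ric|^2)$ in terms of $f$ times a sum of manifestly controllable curvature quantities plus the term $\frac{n\kappa}{n-1}|\mathring{Ric}|^2$.

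First I would bring in the sectional curvature hypothesis through Lemma~\ref{lemmaK1}. That lemma rewrites $R_{ij}R_{jk}R_{ik}-R_{ijkl}R_{jl}R_{ik}$ as $\frac{1}{n-1}R|\mathring{Ric}|^2+\frac{n}{n-2}\mathrm{tr}(\mathring{Ric}^3)-W_{ijkl}R_{ik}R_{jl}$, and under zero radial Weyl curvature the Weyl term is again nonnegative by (\ref{eqw12}) (it equals a nonnegative multiple of $f|C|^2$ when weighted by $f$). Hence the combination $\frac{2}{n-1}R|\mathring{Ric}|^2+\frac{2n}{n-2}\mathrm{tr}(\mathring{Ric}^3)$ appearing in (\ref{klmn}) is, up to the Weyl correction, exactly twice the nonnegative quantity $f\,(R_{ij}R_{jk}R_{ik}-R_{ijkl}R_{jl}R_{ik})$ controlled by (\ref{mnb}). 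The strategy is therefore to rewrite (\ref{klmn}) so that every term on the right is visibly nonnegative: the gradient terms $\frac{1}{n-1}|C|^2+|\nabla Ric|^2$ are nonnegative, $f\geq 0$ by hypothesis, the scalar curvature $R>0$, and the cubic curvature contribution is nonnegative by the Berger inequality.

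Next I would integrate (\ref{klmn}) over $M^n$. The left-hand side integrates to a boundary term by the divergence theorem; since $f$ vanishes on $\partial M$ for a Miao-Tam critical metric, one checks (as in the derivation of (\ref{intaux})) that this boundary contribution vanishes, so $\int_M \frac{1}{2}\mathrm{div}(f\nabla|Ric|^2)\,dM_g=0$. With $\kappa=1$ the remaining term $\frac{n}{n-1}\int_M|\mathring{Ric}|^2\,dM_g$ is nonnegative, and all other integrands are nonnegative by the preceding paragraph. Since the total is zero, each term must vanish identically; in particular $\int_M|\mathring{Ric}|^2\,dM_g=0$, forcing $\mathring{Ric}\equiv 0$, so $M^n$ is Einstein.

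The conclusion then follows by invoking the rigidity theorem of Miao and Tam (Theorem 1.1 in \cite{miaotamTAMS}): a compact Einstein Miao-Tam critical metric must be isometric to a geodesic ball in a simply connected space form $\Bbb{R}^n$ or $\Bbb{S}^n$. The main obstacle I anticipate is \emph{sign bookkeeping}: one must verify that the $\kappa=1$ term enters with the correct (nonnegative) sign after integration, and that the Weyl contributions from both Theorem~\ref{thmMainA} and Lemma~\ref{lemmaK1} are simultaneously absorbed as nonnegative quantities under the zero radial Weyl curvature condition, rather than producing terms of indefinite sign. A careful check that the boundary term genuinely vanishes (using $f|_{\partial M}=0$ and compactness) is also essential, since the entire argument rests on the integral of the divergence being exactly zero.
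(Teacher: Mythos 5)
Your proof is correct and follows essentially the same route as the paper: the identity of Theorem~\ref{thmMainA} combined with Lemma~\ref{lemmaK1}, the zero radial Weyl condition, Berger's inequality (\ref{mnb}), integration against the vanishing boundary term, and the Miao--Tam rigidity theorem; your detour through (\ref{klmn}) and (\ref{eqw12}) is algebraically equivalent to the paper's direct substitution of Lemma~\ref{lemmaK1} into Theorem~\ref{thmMainA}. The mention of Okumura's lemma in your opening sentence is vestigial and unused --- which is just as well, since the pinching hypothesis it would require is not available here.
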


\begin{proof} To begin with, we multiply by $f$ the expression obtained in Lemma \ref{lemmaK1} and then we use Theorem \ref{thmMainA} to infer

\begin{eqnarray*}
\frac{1}{2}{\rm div}(f\nabla|Ric|^{2})&=&\Big(\frac{n-2}{n-1}|C_{ijk}|^{2}+|\nabla Ric|^{2}\Big)f+\frac{n}{n-1}|\mathring{Ric}|^{2}\\
&&+2\Big(R_{ij}R_{jk}R_{ik}-R_{ijkl}R_{jl}R_{ik}\Big)f\\
&&-\frac{n-2}{n-1}W_{ijkl}\nabla_{l}fC_{ijk}
\end{eqnarray*} and since $M^n$ has zero radial Weyl curvature we get
\begin{eqnarray*}
\frac{1}{2}{\rm div}(f\nabla|Ric|^{2})&=&\Big(\frac{n-2}{n-1}|C_{ijk}|^{2}+|\nabla Ric|^{2}\Big)f+\frac{n}{n-1}|\mathring{Ric}|^{2}\nonumber\\
&&+2\Big(R_{ij}R_{jk}R_{ik}-R_{ijkl}R_{jl}R_{ik}\Big)f.
\end{eqnarray*} Finally, upon integrating the above expression over $M^n$ we use (\ref{mnb}) to conclude that $\mathring{Ric}=0$ and then $(M^{n}\,g)$ is Einstein. Hence, we apply Theorem 1.1 in \cite{miaotamTAMS} to conclude that $(M^{n},g)$ is isometric to a geodesic ball in $\mathbb{R}^{n}$ or $\mathbb{S}^{n}.$ This finishes the proof of the proposition.

\end{proof}

Proceeding, we shall prove Theorem \ref{thmstaticKA}, which was announced in Section \ref{intro}.

\begin{theorem}
Let $(M^{n},\,g,\,f)$ be a positive static triple with non-negative sectional curvature, zero radial Weyl curvature and scalar curvature $R=n(n-1).$ Then Then up to a finite quotient $M^{n}$ is isometric to either the standard hemisphere $\Bbb{S}^{n}_{+}$ or the standard cylinder over $\mathbb{S}^{n-1}$ with the product metric described in Theorem \ref{classsifStatic}.
\end{theorem}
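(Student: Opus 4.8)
The plan is to run exactly the Bochner argument of Proposition~\ref{thmKl}, but now in the static regime $\kappa=0$, and then to feed the resulting rigidity into the Bach-flat classification of Theorem~\ref{classsifStatic}. First I would specialize Theorem~\ref{thmMainA} to a positive static triple, so that the term $\frac{n\kappa}{n-1}|\mathring{Ric}|^{2}$ drops out. The zero radial Weyl curvature hypothesis $W_{ijkl}\nabla_{l}f=0$ immediately kills $\frac{n-2}{n-1}W_{ijkl}\nabla_{l}fC_{ijk}$, and using Lemma~\ref{lemmaK1} to reassemble the remaining cubic-curvature and Weyl terms I expect to reach
\begin{eqnarray*}
\frac{1}{2}{\rm div}(f\nabla|Ric|^{2})&=&\Big(\frac{n-2}{n-1}|C_{ijk}|^{2}+|\nabla Ric|^{2}\Big)f\\
&&+2\big(R_{ij}R_{jk}R_{ik}-R_{ijkl}R_{jl}R_{ik}\big)f,
\end{eqnarray*}
which is precisely the static analogue of the identity appearing in the proof of Proposition~\ref{thmKl} (without the $\frac{n}{n-1}|\mathring{Ric}|^{2}$ summand coming from $\kappa=1$).

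The second step is to integrate this identity over $M^{n}$. Since $f$ vanishes precisely on $\partial M$, the divergence theorem yields $\int_{M}{\rm div}(f\nabla|Ric|^{2})\,dM_{g}=\int_{\partial M}f\,\langle\nabla|Ric|^{2},\nu\rangle\,dA=0$, so the left-hand side contributes nothing. On the right-hand side $f\ge 0$ by hypothesis, while the Berger inequality~(\ref{mnb}) gives $R_{ij}R_{jk}R_{ik}-R_{ijkl}R_{jl}R_{ik}\ge 0$ because $M^{n}$ has non-negative sectional curvature. Hence every integrand is non-negative and must vanish identically; in the interior, where $f>0$, this forces $\nabla Ric=0$ and $C_{ijk}=0$, and both extend to all of $M^{n}$ by continuity.

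Having parallel Ricci curvature and vanishing Cotton tensor, I would next reproduce verbatim the Bach-flat computation from the proof of Corollary~\ref{corB}: formula (\ref{bach}) collapses to $(n-2)B_{ij}=W_{ikjl}R_{kl}$ once $C_{ijk}=0$, and multiplying by $f$ and inserting the static equation in the form $fR_{kl}=\nabla_{k}\nabla_{l}f+\frac{R}{n-1}fg_{kl}$ (using that $W$ is trace-free) gives $(n-2)fB_{ij}=W_{ikjl}\nabla_{k}\nabla_{l}f$. Writing this as $\nabla_{k}(W_{ikjl}\nabla_{l}f)-(\nabla_{k}W_{ikjl})\nabla_{l}f$ and invoking once more the zero radial Weyl condition together with (\ref{cottonwyel}) and $C_{ijk}=0$, both terms vanish, so $fB_{ij}=0$ and therefore $B_{ij}\equiv 0$ on $M^{n}$.

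Finally, $(M^{n},g,f)$ is a Bach-flat positive static triple with $R=n(n-1)$, so Theorem~\ref{classsifStatic} applies and, up to a finite cover, $M^{n}$ is the standard hemisphere, the standard cylinder over $\mathbb{S}^{n-1}$, or a Schwarzschild space. The hard part will be discarding the Schwarzschild possibility, and here I would exploit the rigidity already obtained rather than grind out curvatures: the Schwarzschild metric has Ricci eigenvalues that genuinely vary along the radial factor, so its Ricci tensor is not parallel, directly contradicting $\nabla Ric=0$ (equivalently, its mixed radial--spherical sectional curvatures become negative, violating the standing hypothesis), whereas both the hemisphere and the cylinder do have parallel Ricci. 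This leaves exactly the two admissible models, the finite quotient arising from the covering in Theorem~\ref{classsifStatic}, which completes the proof.
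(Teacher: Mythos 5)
Your proposal is correct and follows essentially the same route as the paper: specialize Theorem~\ref{thmMainA} to $\kappa=0$, absorb the cubic and Weyl--Ricci terms via Lemma~\ref{lemmaK1}, drop the radial Weyl term, integrate and use Berger's inequality (\ref{mnb}) to get $C_{ijk}=0$ and $\nabla Ric=0$, then rerun the Bach-flatness computation from Corollary~\ref{corB} and invoke Theorem~\ref{classsifStatic}. Two small remarks: your coefficient $\frac{n-2}{n-1}$ on $|C_{ijk}|^{2}$ is the one that actually comes out of the substitution (the paper writes $\frac{1}{n-1}$, which is harmless since both are positive), and your explicit exclusion of the Schwarzschild case via the failure of $\nabla Ric=0$ is a step the paper leaves implicit but which is genuinely needed.
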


\begin{proof} The proof looks like the one from the previous theorem. In fact, substituting Lemma \ref{lemmaK1} into Theorem \ref{thmMainA} we arrive at
\begin{eqnarray*}
\frac{1}{2}{\rm div}(f\nabla|Ric|^{2})&=&\Big(\frac{1}{n-1}|C_{ijk}|^{2}+|\nabla Ric|^{2}\Big)f\nonumber\\&&+2\Big(R_{ij}R_{jk}R_{ik}-R_{ijkl}R_{jl}R_{ik}\Big)f.
\end{eqnarray*} Whence, we integrate the above expression over $M^n$ and then use (\ref{mnb}) to conclude that $(M^{n},\,g)$ must have vanish Cotton tensor and parallel Ricci curvature. Finally, it suffices to repeat the same arguments applied in final steps of the proof of Corollary \ref{corB}. So, the proof is completed.  
\end{proof}

As an immediate consequence of the previous theorem we get the following result.

\begin{corollary}
\label{thmstaticKp}
Let $(M^{3},\,g,\,f)$ be a three-dimensional positive static triple with non-negative sectional curvature and normalized scalar curvature $R=6.$ Then up to a finite quotient $M^{3}$ is isometric to either the standard hemisphere $\Bbb{S}^{3}_{+}$ or the standard cylinder over $\mathbb{S}^{2}$ with the product metric described in Theorem \ref{classsifStatic}.
\end{corollary}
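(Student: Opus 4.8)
The plan is to obtain this statement as a direct specialization of Theorem \ref{thmstaticKA} to the three-dimensional case, since the hypotheses of that theorem are automatically satisfied when $n=3$. The only point requiring verification is that all three assumptions of Theorem \ref{thmstaticKA}---non-negative sectional curvature, zero radial Weyl curvature, and scalar curvature $R=n(n-1)$---hold here.

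First I would observe that non-negative sectional curvature is assumed directly, and that the normalized scalar curvature $R=6$ is precisely $n(n-1)$ evaluated at $n=3$, so the curvature normalization matches exactly the hypothesis of Theorem \ref{thmstaticKA}. Next, I would recall the classical fact that on any three-dimensional Riemannian manifold the Weyl tensor vanishes identically, i.e. $W_{ijkl}\equiv 0$. Consequently the zero radial Weyl curvature condition $W(\,\cdot\,,\,\cdot\,,\,\cdot\,,\nabla f)=0$ is trivially satisfied, and there is no further condition to impose. This is in fact the same reduction already used in passing from Proposition \ref{thmKl} to Theorem \ref{thmK} in the Miao-Tam setting.

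With all three hypotheses verified, the conclusion follows immediately by applying Theorem \ref{thmstaticKA} with $n=3$: up to a finite quotient, $(M^3,g,f)$ is isometric to either the standard hemisphere $\mathbb{S}^3_+$ or the standard cylinder over $\mathbb{S}^2$ endowed with the product metric described in Theorem \ref{classsifStatic}. There is no genuine obstacle to overcome, as the dimensional reduction is purely formal; the substantive content lies entirely in Theorem \ref{thmstaticKA}, whose proof in turn rests on the B\"ochner type formula of Theorem \ref{thmMainA}, the Berger-type inequality (\ref{mnb}) coming from non-negative sectional curvature, and the Bach-flat classification of Theorem \ref{classsifStatic}.
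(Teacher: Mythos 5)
Your proposal is correct and coincides with the paper's own treatment: the authors state this corollary as an immediate consequence of Theorem \ref{thmstaticKA}, the point being exactly that for $n=3$ the Weyl tensor vanishes identically (so zero radial Weyl curvature is automatic) and $R=6=n(n-1)$. Nothing further is needed.
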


We point out that, by a different approach, Ambrozio \cite{Lucas} was able to show that a three-dimensional compact positive static triple with scalar curvature $6$ and non-negative Ricci curvature must be equivalent to the standard hemisphere or is covered by the standard cylinder.

\end{document}